\documentclass[a4paper,12pt]{article}
\usepackage{amssymb,amsmath,amsthm,color}

\numberwithin{equation}{section}

\setlength{\textheight}{680pt}
\setlength{\textwidth}{420pt}
\setlength{\headheight}{0pt}
\setlength{\topmargin}{-30pt}

\numberwithin{equation}{section}

\newtheorem{thm}{\sc Theorem}[section]

\newtheorem{lem}[thm]{\sc Lemma}

\newtheorem{rem}[thm]{\sc Remark}

\newcommand{\h}{{\mathbb H}}

\newcommand{\R}{{\mathbb R}}

\newcommand{\ve}{\varepsilon}
\newcommand{\holder}{H\"older}

\title{Sobolev type inequalities 
for fractional maximal functions and Riesz potentials in half spaces}

\author{Yoshihiro Mizuta and Tetsu Shimomura}
\date{}

\begin{document}
\maketitle

\begin{abstract}
In this paper, we study Sobolev type inequalities for fractional maximal functions $M_{\h,\nu}f$ and Riesz potentials $I_{\h,\alpha} f$ of functions in  weighted Morrey spaces of the double phase 
functional $\Phi(x,t) = t^{p} +  (b(x) t)^{q}$ in the half space, where $1<p<q$ and $b(\cdot)$ is non-negative, bounded and H\"older continuous of order $\theta \in (0,1]$. We also show that the Riesz potential operator $I_{\h,\alpha}$ embeds from weighted Morrey space of the double phase functional $\Phi(x,t)$ to weighted Campanato spaces. Finally, we treat the similar embedding for Sobolev functions. 
 \end{abstract}

\footnote{Mathematics Subject Classification : Primary 46E30, 42B25, 31B15}
\footnote{Key words and phrases : fractional maximal functions, 
Riesz potentials,
Sobolev's inequality, double phase functionals, weighted Morrey spaces, weighted Campanato spaces}

\section{Introduction}

Morrey spaces were introduced by C. B. Morrey in 1938 to study the existence and regularity of partial differential equations (\cite{Mo}). We also refer to \cite{P}. The boundedness of the maximal operator was studied on Morrey spaces in \cite{CF}. For Herz-Morrey-Orlicz spaces  on the half space, see \cite{MOS3}. The boundedness of the fractional maximal operator was studied on Morrey spaces in \cite{FR}. For local Morrey-type spaces, see \cite{BGGM}. There are many related results; see e.g. \cite{AHS, BH, CCF, KiS, KrK, L, MMOS, MNOS, SST}. There has been an increasing interest in  Sobolev spaces; see \cite{CF1, DHHR, HH} and so on. 

For a locally integrable function $f$ 
 on the half space $\h = \{ x=(x',x_n) \in \R^{n-1} \times \R^1: 
x_n > 0\}$ and $\nu$ with $0\le \nu \le n$, 
the  fractional Hardy-Littlewood 
 maximal function $M_{\h,\nu}f$ is defined by
\[
	M_{\h,\nu}f(x)=\sup_{\{r >0: B(x,r)\subset \h\}} 
	\frac{r^\nu}{|B(x,r)|}
	\int_{B(x,r)} |f(y)|\,  dy,
\]
where $B(x,r)$ is the ball in $\R^n$ centered at  $x$ of radius $r>0$ 
and  $|B(x,r)|$ denotes its Lebesgue measure.  The mapping $f \mapsto M_{\h,\nu} f$ is called the fractional central maximal operator. When $\nu=0$, we write $M_\h f$ instead of $M_{\h,0}f$. 

 In view of the well-known theorem by Hardy and Littlewood,  
  the usual  maximal operator  is bounded in $L^p(\h)$. 
  However, this  is not always true 
  in the weighted $L^p$ spaces, 
  as will be seen in Remarks \ref{rem:2.0} and \ref{rem:2.1} below. 
  To conquer difficulties, we consider the local maximal operators; 
 for an application of the local maximal operators, see \cite{KL} . 
    
   In the previous paper \cite{MS5}, we established a Sobolev type inequality
 for the fractional maximal function  $M_{\h,\nu} f$  
 in weighted Morrey spaces. In fact, the following result is shown in \cite[Theorem 2.1]{MS5}:
\vspace{3mm}

{\sc Theorem A}.
\sl
Let $p > 1$, $1/p^* = 1/p - \nu/\sigma > 0 $ 
and  $0<\sigma < (n+1)/2$. 
Suppose $ \beta < (n+1)/(2p')$, where $1/p + 1/p' = 1$. 
Then there exists a constant $C>0$ such that
\begin{eqnarray*}
\sup_{\{r > 0: B(x,r) \subset \h\}}  
\frac{r^\sigma}{|B(x,r)|}\int_{B(x,r)}   \left( z_n^\beta M_{\h,\nu} f(z)  \right)^{p^*} 
  dz  &\le& C 
\end{eqnarray*}
when $\displaystyle \sup_{r >0,x\in \h} 
	\frac{r^\sigma}{|B(x,r)|}\int_{\h\cap B(x,r)} 
	\left( |f(y)| y_n^\beta \right)^p dy \le 1$. 
\rm
\vspace{3mm}

This is not true for the usual fractional maximal function $M_{\nu} f$ defined by
\[
	M_{\nu}f(x)=\sup_{r >0} 
	\frac{r^\nu}{|B(x,r)|}
	\int_{\h\cap B(x,r)} |f(y)|\,  dy; 
\]
see Remarks \ref{rem:2.0} and \ref{rem:2.1}.

The double phase functional was introduced by Zhikov \cite{Z} in the 1980s. Regarding regularity theory of differential equations, Mingione and collaborators \cite{BCM1, BCG, CM, CM2} investigated a double phase functional 
$$
\hat{\Phi}(x,t) = t^p + a(x) t^q, \ x \in \R^N, \ t \ge 0, 
$$ 
where $1 < p < q$, $a(\cdot)$ is nonnegative, bounded and H\"older continuous of order $\theta \in (0,1]$. See \cite{HO, Shin} for Calder\'on-Zygmund estimates, \cite{MMOS3, MOS3} for Sobolev inequalities, \cite{MS4} for Hardy-Sobolev inequalities and \cite{MNOS3, MNOS4} for Campanato-Morrey spaces for the double phase functional. We refer to for instance \cite{BL, CS, FM, FO1, HHK, MS3} and references therein for other recent works. 

In the present paper, relaxing  the continuity of $a(\cdot)$, we consider 
the double phase functional 
\[ 
\Phi(x,t) = t^{p} +  (b(x) t)^{q},
 \]
where $1<p<q$ and
$b(\cdot)$ is non-negative, bounded and H\"older continuous of order $\theta \in (0,1]$ (cf. \cite{CM}); if we write $\Phi(x,t) = t^{p} +  a(x) t^{q}$ with $a(x) = b(x)^q$, then $a$ is not always  H\"older continuous of order $\theta q$ when $\theta q > 1$.

In connection with Theorem A above, 
our first aim in this paper is to give  Sobolev type inequalities 
 for $M_{\h,\nu} f$ of functions in weighted Morrey spaces of the double phase functional $\Phi(x,t)$ (Theorem  \ref{thm:2.1}). 
We are mostly interested in functions $f$ 
 for which $M_{\nu} f = \infty$; see Remark \ref{rem:2.1} given below.

For $0 < \alpha < n$ and a locally integrable function $f$ on $\h$, we define the Riesz potential of order $\alpha$ in $\h$ by 
\[
I_{\h,\alpha} f(x) = \int_{B(x,x_n)} |x-y|^{\alpha-n} f(y) dy. 
\]
Our arguments are applicable to the study of Sobolev's inequalities for $I_{\h,\alpha} f$ (Theorem \ref{thm:3.1}), which have not been found in the literature. The sharpness of Theorem  \ref{thm:3.1} will be discussed in Remark \ref{rem:3.1}.
 
In Section \ref{s4}, we are concerned with Sobolev's inequalities for $I_{\h,\alpha} f$ of functions in  weighted Morrey spaces of the double phase functional $\Phi(x,t)$ (Theorem  \ref{thm:3.2}). 

In Section \ref{s5}, we treat the case $\sigma=\alpha p$. In fact, we show that $I_{\h,\alpha}$ embeds from weighted Morrey spaces to 
 weighted Campanato spaces in the case $\sigma=\alpha p<(n+1)/2$ (Theorem \ref{thm:3.3}). Further, we show that $I_{\h,\alpha}$ embeds from weighted Morrey spaces of the double phase functional $\Phi(x,t)$ to   weighted Campanato spaces in the case $\sigma=\alpha q= (\alpha+\theta) p$ (Theorem \ref{thm:3.4}). 

In the final section, we show  the embedding for Sobolev functions   
in the same frame (Theorems \ref{thm:A1} and \ref{thm:A2}). 

Throughout this paper, let $C$ denote various constants 
independent of the variables in question.   
The symbol $g \sim h$ means that $C^{-1}h\le g\le Ch$ for some constant $C>0$.

\section{Boundedness of fractional maximal operators for double phase functionals}

Throughout this paper, let 
\[
p > 1 \quad \text{and} \quad \sigma > 0 .
\]

Our aim in this section is to study the boundedness of the fractional central maximal operator  $M_{\h,\nu} $ in weighted Morrey spaces of the double phase functional $\Phi(x,t)$.

\begin{thm} \label{thm:2.1}
\sl
Let  $p > 1$, $1/q = 1/p - \theta/\sigma$, 
$1/p^* = 1/p - \nu/\sigma > 0$ and  
$1/q^* = 1/q - \nu/\sigma > 0$. Set 
\[
{\Phi}^*(x,t) = \Phi_{p^*,q^*}(x,t) 
=  t^{p^*} + (b(x) t)^{q^*} .
\]
Suppose $ \beta < (n+1)/(2p')$ 
and $0<\sigma < (n+1)/2$, where $1/p + 1/p' = 1$. 
Then there exists a constant $C>0$ such that
\begin{eqnarray*}
\sup_{\{r > 0: B(x,r) \subset \h\}}  
\frac{r^\sigma}{|B(x,r)|}\int_{B(x,r)}   
\Phi^*(z, z_n^\beta M_{\h,\nu} f(z)  )   dz  \le C 
\end{eqnarray*}
when $\displaystyle \sup_{r >0,x\in \h} 
	\frac{r^\sigma}{|B(x,r)|}\int_{\h\cap B(x,r)} 
	\Phi(y, |f(y)| y_n^\beta) dy \le 1$. 
\end{thm}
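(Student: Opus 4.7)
The plan is to reduce the double phase estimate to two applications of Theorem A, combined with a pointwise bound that exploits the H\"older continuity of $b$. Since
\[
\Phi^*(z, z_n^\beta M_{\h,\nu} f(z)) = \bigl(z_n^\beta M_{\h,\nu} f(z)\bigr)^{p^*} + \bigl(b(z)\, z_n^\beta M_{\h,\nu} f(z)\bigr)^{q^*},
\]
the first summand is handled immediately by Theorem A: the hypothesis
$\sup_{r,x} \frac{r^\sigma}{|B(x,r)|}\int \Phi(y,|f(y)|y_n^\beta)\,dy\le 1$
majorizes the pure $p$-part $\sup_{r,x} \frac{r^\sigma}{|B(x,r)|}\int (|f(y)|y_n^\beta)^p\,dy\le 1$, so Theorem A (applied with the given $p$, $\nu$, $\sigma$, $\beta$) gives the $L^{p^*}$-Morrey bound on $z_n^\beta M_{\h,\nu} f$.

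The main work is the $q^*$-term. Fix $z\in \h$ and $r>0$ with $B(z,r)\subset \h$. For every $y\in B(z,r)$, the H\"older continuity of $b$ yields $b(z)\le b(y)+C|z-y|^\theta \le b(y)+Cr^\theta$. Multiplying the defining average of $M_{\h,\nu} f(z)$ by $b(z)$ and using this estimate gives
\[
b(z)\,\frac{r^\nu}{|B(z,r)|}\int_{B(z,r)}|f(y)|\,dy \le \frac{r^\nu}{|B(z,r)|}\int_{B(z,r)}b(y)|f(y)|\,dy + C\,\frac{r^{\nu+\theta}}{|B(z,r)|}\int_{B(z,r)}|f(y)|\,dy .
\]
Taking the supremum over admissible $r$ yields the key pointwise inequality
\[
b(z)\, M_{\h,\nu} f(z) \le M_{\h,\nu}(b|f|)(z) + C\,M_{\h,\nu+\theta} f(z),
\]
so $(b(z)z_n^\beta M_{\h,\nu} f(z))^{q^*}$ is controlled, up to a constant, by $(z_n^\beta M_{\h,\nu}(b|f|)(z))^{q^*} + (z_n^\beta M_{\h,\nu+\theta} f(z))^{q^*}$.

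Each of these two pieces is now bounded by Theorem A, with different choices of parameters. For the first, I apply Theorem A with $p$ replaced by $q$ and $f$ replaced by $b|f|$: the hypothesis provides $\sup_{r,x}\frac{r^\sigma}{|B(x,r)|}\int (b(y)|f(y)|y_n^\beta)^q\,dy\le 1$, the exponent condition $1/q^*=1/q-\nu/\sigma>0$ is given, and $\beta<(n+1)/(2p')<(n+1)/(2q')$ since $q>p$; this yields the desired $L^{q^*}$-bound for $z_n^\beta M_{\h,\nu}(b|f|)$. For the second, I apply Theorem A with $\nu$ replaced by $\nu+\theta$ and the original $p$, $f$: the identity $1/p-(\nu+\theta)/\sigma = 1/q-\nu/\sigma = 1/q^*$ identifies the resulting exponent as $q^*$, so the $L^p$-Morrey hypothesis (already available from the $\Phi$-bound) produces the $L^{q^*}$-bound for $z_n^\beta M_{\h,\nu+\theta} f$. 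Adding the two contributions and combining with the $p^*$-part completes the argument. The main obstacle is the design of the pointwise splitting: once the H\"older inequality is translated into the bound involving $M_{\h,\nu}(b|f|)$ and the shifted operator $M_{\h,\nu+\theta} f$, verifying that both reductions fit the hypotheses of Theorem A (with matching exponents $q^*$) is the crux, while the remainder is routine.
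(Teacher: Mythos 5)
Your proof is correct. The opening move is the same as the paper's: split $b(z)$ via H\"older continuity, $b(z)\le b(y)+Cr^\theta$ on $B(z,r)$, which produces the two terms $M_{\h,\nu}(b f)$ and $M_{\h,\nu+\theta}f$. Where you diverge is in how these terms are estimated. You treat Theorem A (i.e.\ \cite[Theorem 2.1]{MS5}) as a black box and apply it three times, with parameters $(p,\nu)$ for $f$, $(q,\nu)$ for $bf$, and $(p,\nu+\theta)$ for $f$, after checking that the exponent arithmetic ($1/p-(\nu+\theta)/\sigma=1/q-\nu/\sigma=1/q^*>0$) and the weight condition ($\beta<(n+1)/(2p')<(n+1)/(2q')$ since $q>p$) transfer; all of these verifications are right, and the needed Morrey hypotheses on $f$ and $bf$ do follow from the $\Phi$-bound. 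The paper instead does not invoke Theorem A for the $q^*$-part: it splits the supremum into the ranges $0<r<x_n/2$ and $x_n/2<r<x_n$, absorbs the weight $z_n^\beta$ into the functions $g=|f|y_n^\beta\chi_\h$ and $h=b|f|y_n^\beta\chi_\h$ to reduce the first range to the unrestricted maximal operators $M_{\nu+\theta}g$, $M_\nu h$ handled by Lemma \ref{lem:2.1}, and controls the boundary range $r\sim x_n$ by Lemma \ref{lem:2.4}; in effect it re-runs the weighted argument inline. Your route is shorter and more modular, since the boundary weight difficulty is already encapsulated in Theorem A, at the (harmless) cost of leaning on the full strength of the weighted result rather than only on the unweighted Morrey boundedness of $M_\nu$ plus the boundary lemma.
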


\begin{rem} \label{rem:2.0}
\rm
In Theorem \ref{thm:2.1}, the assumption that $B(x,r) \subset \h$ is needed. See \cite[Remark 2.9]{MS5}.
\end{rem}

Before a proof of Theorem \ref{thm:2.1}, we recall some lemmas from \cite{MS5, FR}. 

\begin{lem}[{\cite[Lemma 2.3]{MS5}}]  \label{lem:1.2}
\sl
 For $\ve > (n-1)/2$, set
 \begin{eqnarray*}
I(x) &=& \int_{B(x,x_n)}   y_n^{\ve - n}  dy .
\end{eqnarray*}
Then there exists a constant $C>0$ such that
\begin{eqnarray*}
I(x) &\le& C  x_n^{\ve}   .
\end{eqnarray*}
\end{lem}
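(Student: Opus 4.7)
The plan is to reduce everything to an integral on the fixed unit ball by a rescaling and then to check that the lower hemisphere (where $y_n$ is small) produces an integrable singularity precisely when $\varepsilon>(n-1)/2$.

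First I would apply the affine change of variables $y = x + x_n z$, so that $y$ ranges over $B(x,x_n)$ exactly when $z$ ranges over $B(0,1)$. Under this substitution $dy = x_n^{\,n}\,dz$ and $y_n = x_n(1+z_n)$, which gives
\begin{equation*}
I(x) \;=\; x_n^{\,n}\int_{B(0,1)} \bigl(x_n(1+z_n)\bigr)^{\varepsilon-n}\,dz
\;=\; x_n^{\,\varepsilon}\int_{B(0,1)}(1+z_n)^{\varepsilon-n}\,dz .
\end{equation*}
Thus the desired bound $I(x)\le C x_n^{\,\varepsilon}$ reduces to the $x$-independent finiteness of $J := \int_{B(0,1)}(1+z_n)^{\varepsilon-n}\,dz$.

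Next I would evaluate $J$ by slicing in the vertical variable. For fixed $z_n\in(-1,1)$, the horizontal slice of $B(0,1)$ is the $(n-1)$-ball of radius $\sqrt{1-z_n^2}$, whose measure is a constant multiple of $(1-z_n^2)^{(n-1)/2}$. By Fubini's theorem,
\begin{equation*}
J \;\le\; C\int_{-1}^{1}(1+z_n)^{\varepsilon-n}(1-z_n^2)^{(n-1)/2}\,dz_n
\;=\; C\int_{-1}^{1}(1+z_n)^{\varepsilon-(n+1)/2}(1-z_n)^{(n-1)/2}\,dz_n .
\end{equation*}
The factor $(1-z_n)^{(n-1)/2}$ is harmless at the upper endpoint $z_n=1$; the only potential issue is the singularity at $z_n=-1$. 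There $(1+z_n)^{\varepsilon-(n+1)/2}$ is integrable if and only if $\varepsilon-(n+1)/2 > -1$, i.e.\ $\varepsilon > (n-1)/2$, which is exactly the hypothesis.

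The main (and only) obstacle is recognizing why the hypothesis $\varepsilon>(n-1)/2$ is sharp: one must not discard the cross-sectional measure $(1-z_n^2)^{(n-1)/2}$ when estimating near $z_n=-1$. Naively bounding $(1-z_n^2)^{(n-1)/2}$ by a constant would give the weaker condition $\varepsilon>n-1$; keeping the genuine $(1+z_n)^{(n-1)/2}$ contribution from the shrinking slices is what produces the critical exponent $(n-1)/2$. Once this slicing estimate is carried out, combining it with the rescaling identity above immediately yields $I(x)\le C x_n^{\,\varepsilon}$ with $C$ depending only on $n$ and $\varepsilon$.
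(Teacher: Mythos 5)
Your proof is correct: the rescaling $y=x+x_nz$ plus horizontal slicing, with the cross-sectional radius $\sqrt{1-z_n^2}$ retained near $z_n=-1$, yields exactly the critical exponent $\varepsilon>(n-1)/2$, and the resulting constant depends only on $n$ and $\varepsilon$. The paper itself does not reprove this lemma (it cites \cite[Lemma 2.3]{MS5}), but your argument is essentially the same computation the authors use elsewhere, e.g.\ in the proof of Lemma \ref{lem:4.1}, where the slice of $B(x,x_n)$ at height $y_n$ is estimated by a ball of radius comparable to $\sqrt{x_ny_n}$; so this is the intended approach, not a genuinely different one.
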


\begin{rem} \label{rem:2.1}
\rm
 If $f(y) = |y_n|^{-1}$, then 
 \begin{eqnarray*}
M_\nu f(x) = \infty 
\end{eqnarray*}
for all $x\in \R^n$. However, 
 \begin{eqnarray*}
M_{\h,1} f(x) \le C 
\end{eqnarray*}
for $x\in \h$, which is shown by Lemma \ref{lem:1.2}. 
\end{rem}


\begin{lem}[{\cite[Lemma 2.4]{MS5}}]  \label{lem:1.3}
\sl
 For $\ve < (n-1)/2$, set
 \begin{eqnarray*}
J(y) &=& \int_{\{x\in \h : |x-y| <x_n\}} 
   x_n^{\ve - n}  dx .
\end{eqnarray*}
Then there exists a constant $C>0$ such that
\begin{eqnarray*}
J(y) &\le& C  y_n^{\ve}     .
\end{eqnarray*}
\end{lem}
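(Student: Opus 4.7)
The plan is to reduce the integral $J(y)$ to an explicitly computable one-dimensional integral by exploiting the symmetries of both the region $\{x\in\h:|x-y|<x_n\}$ and the integrand $x_n^{\ve-n}$. First, the region and the integrand are both invariant under horizontal translation $x \mapsto x + (v,0)$ for $v \in \R^{n-1}$, so without loss of generality we may assume $y = y_n e_n$, where $e_n$ is the $n$-th unit vector. Second, the dilation $x = y_n \tilde{x}$ sends the region $\{|x - y_n e_n| < x_n\}$ to $\{|\tilde{x} - e_n| < \tilde{x}_n\}$ and produces a Jacobian factor $y_n^n$ together with $x_n^{\ve-n} = y_n^{\ve-n} \tilde{x}_n^{\ve-n}$, yielding $J(y) = y_n^{\ve} J(e_n)$. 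Thus the lemma reduces to showing $J(e_n) < \infty$ under the hypothesis $\ve < (n-1)/2$.

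To estimate $J(e_n)$, I would slice the region by the hyperplane $\{x_n = t\}$. The inequality $|x - e_n|^2 < x_n^2$ rewrites as $|x'|^2 + (t-1)^2 < t^2$, i.e., $|x'|^2 < 2t - 1$, which is non-empty only for $t > 1/2$. Hence the cross-section is an $(n-1)$-dimensional ball of radius $\sqrt{2t-1}$, whose volume is $c_{n-1}(2t-1)^{(n-1)/2}$. Fubini then gives
\[
J(e_n) \;=\; c_{n-1} \int_{1/2}^{\infty} t^{\ve - n} (2t-1)^{(n-1)/2}\, dt .
\]

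Convergence of this one-dimensional integral is the only nontrivial point, and it is elementary. Near $t = 1/2$ the integrand is bounded (the factor $(2t-1)^{(n-1)/2}$ vanishes and $t^{\ve-n}$ is bounded there), so the only issue is behavior as $t \to \infty$, where the integrand behaves like a constant multiple of $t^{\ve - (n+1)/2}$; this is integrable at infinity precisely when $\ve - (n+1)/2 < -1$, i.e., when $\ve < (n-1)/2$, which is our assumption. Combining with the scaling identity $J(y) = y_n^{\ve} J(e_n)$ yields the desired bound $J(y) \le C y_n^{\ve}$.

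There is no real obstacle here: the argument is a standard reduction by translation/dilation symmetry followed by polar-type slicing. The only point that requires any care is to confirm that the sharpness of the hypothesis $\ve < (n-1)/2$ arises from the large-$t$ tail, matching the parallel lemma \ref{lem:1.2} where the complementary condition $\ve > (n-1)/2$ plays the analogous role.
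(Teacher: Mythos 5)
Your argument is correct: the translation/dilation reduction gives the exact identity $J(y)=y_n^{\ve}J(e_n)$, and the slicing computation $J(e_n)=c_{n-1}\int_{1/2}^{\infty}t^{\ve-n}(2t-1)^{(n-1)/2}\,dt$ converges precisely because $\ve<(n-1)/2$, so the constant $C=J(e_n)$ works. Note that this paper does not prove the lemma at all — it is quoted from \cite[Lemma 2.4]{MS5} — so there is no in-paper proof to compare with; your scaling-plus-slicing argument is a clean, self-contained substitute, and it even shows the stated bound is an equality up to the constant, which makes the sharpness of the threshold $(n-1)/2$ (and its complementarity with Lemma \ref{lem:1.2}) transparent.
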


We know the following result on the boundedness of $M_\nu$.

\begin{lem} [{\cite[Lemma 4]{FR}, cf. \cite[Corollary 2]{AHS}}] \label{lem:2.1}
\sl
 Let $1/p^* = 1/p - \nu/\sigma > 0$. 
 Then there exists a constant $C>0$ such that
\begin{eqnarray*}
\sup_{r > 0}  
\frac{r^\sigma}{|B(x,r)|}\int_{B(x,r)}   
\left(  M_\nu g(x)  \right)^{p^*} 
  dx  &\le& C 
\end{eqnarray*}
when $\displaystyle \sup_{r >0,x\in \h} 
	\frac{r^\sigma}{|B(x,r)|}\int_{B(x,r)} |g(y)|^p dy \le 1$. 
\end{lem}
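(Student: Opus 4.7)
The plan is to derive the Morrey bound on $M_\nu g$ from the analogous Morrey bound for the ordinary Hardy--Littlewood maximal operator $M$ by means of Hedberg's pointwise interpolation trick. First I would establish the pointwise inequality
\begin{equation*}
M_\nu g(y) \le C \bigl( M g(y) \bigr)^{p/p^*}
\end{equation*}
for every $y\in\h$ under the normalized hypothesis on $g$. Fix $y$ and write $M_\nu g(y) = \sup_{s>0} A_s$ with $A_s = \frac{s^\nu}{|B(y,s)|}\int_{\h\cap B(y,s)}|g(z)|\,dz$. The trivial bound $A_s \le s^\nu Mg(y)$ is useful for small $s$; for large $s$, H\"older's inequality with exponent $p$ and the Morrey hypothesis give $\int_{B(y,s)}|g|\,dz \le C s^{n-\sigma/p}$, whence $A_s \le C s^{\nu - \sigma/p}$. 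This quantity is decreasing in $s$ precisely because $1/p^* = 1/p - \nu/\sigma > 0$. Balancing the two bounds at $s^{\sigma/p}\sim 1/Mg(y)$ then yields the pointwise estimate.

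Raising to the power $p^*$ gives $(M_\nu g)^{p^*}\le C (Mg)^p$, so it suffices to prove the Morrey estimate
\begin{equation*}
\frac{r^\sigma}{|B(x,r)|}\int_{B(x,r)}\bigl(Mg(z)\bigr)^{p}\,dz \le C
\end{equation*}
uniformly in $x\in\h$ and $r>0$. This is the classical Chiarenza--Frasca argument: I would decompose $g = g_1 + g_2$ with $g_1 = g\,\chi_{B(x,2r)}$. The $L^p$-boundedness of $M$ combined with the Morrey assumption bounds the contribution of $g_1$, since $\int_{B(x,2r)}|g|^p\,dz \le C r^{n-\sigma}$ and hence $\int_{B(x,r)}(Mg_1)^p\,dz \le C r^{n-\sigma}$. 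For $g_2$, the averages $\frac{1}{|B(y,s)|}\int_{B(y,s)}|g_2|\,dz$ vanish for $s\le r$, while for $s>r$ the same H\"older--Morrey computation as in the first step produces the pointwise bound $Mg_2(y)\le C r^{-\sigma/p}$ on $B(x,r)$, contributing $C r^{n-\sigma}$ to the $L^p$ integral. Multiplying by $r^\sigma/|B(x,r)|$ closes the estimate.

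The main technical point is the exponent bookkeeping in the Hedberg splitting: the hypothesis $\nu/\sigma<1/p$ is exactly what is needed for the far-field contribution $s^{\nu-\sigma/p}$ to be summable when taking the supremum in $s$, and the scaling homogeneity of the Morrey hypothesis under multiplying $g$ by a constant is what legitimizes the normalization to $1$ (the general case follows by rescaling). No boundary difficulty arises, since $g$ is extended by zero outside $\h$ and the Morrey control hypothesized on balls centered in $\h$ is exactly what the argument uses; the Chiarenza--Frasca decomposition is performed at centers $x\in\h$, matching the hypothesis verbatim.
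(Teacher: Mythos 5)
The paper never proves this lemma: it is imported verbatim from \cite[Lemma 4]{FR} (cf.\ \cite[Corollary 2]{AHS}), so there is no internal argument to compare yours with. Your proof --- Hedberg's pointwise interpolation $M_\nu g(y)\le C\bigl(Mg(y)\bigr)^{p/p^*}$, obtained by balancing the near-field bound $s^\nu Mg(y)$ against the H\"older--Morrey far-field bound $Cs^{\nu-\sigma/p}$ (decreasing in $s$ exactly because $\nu/\sigma<1/p$), followed by the Chiarenza--Frasca decomposition $g=g\chi_{B(x,2r)}+g\chi_{\R^n\setminus B(x,2r)}$ and the $L^p$-boundedness of $M$ for $p>1$ --- is correct, with the exponent bookkeeping checking out ($\delta^\nu Mg(y)=\delta^{\nu-\sigma/p}$ at $\delta^{\sigma/p}\sim 1/Mg(y)$ gives the power $1-\nu p/\sigma=p/p^*$), and it is in substance the standard proof of the cited result, so it legitimately replaces the citation. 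The one point worth making explicit is the location of centers: as the lemma is used in the paper, $g$ is supported in $\h$ and the conclusion is only needed for balls $B(x,r)\subset\h$, so every ball your argument invokes is centered in $\h$ and the hypothesis applies verbatim; if the conclusion is read over arbitrary balls, note that for $g$ supported in $\h$ the Morrey hypothesis extends to centers outside $\h$ at the cost of a factor, since any ball meeting $\h$ lies in a ball of twice the radius centered at a point of $\h$ (and, as usual, $0\le\nu$ and $\sigma\le n$ are implicitly assumed).
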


\begin{lem}[{\cite[Lemma 2.7]{MS5}}]  \label{lem:2.4}
\sl
  Set
 \begin{eqnarray*}
K(x) &=& \frac{x_n^\nu}{|B(x,x_n)|} \int_{B(x,x_n)} |f(y)| dy .
\end{eqnarray*}
Suppose 
$1/p^* = 1/p - \nu/\sigma > 0$, 
$0<\sigma < (n+1)/2$ and $ \beta < (n+1)/(2p')$. 
Then there exists a constant $C>0$ such that
\begin{eqnarray*}
\sup_{0<r<x_n}  \frac{r^\sigma}{|B(x,r)|}    \int_{B(x,r)}
\left( K(z) z_n^\beta \right)^{p^*} dz \le C  
\end{eqnarray*}
when $\displaystyle 
\sup_{x\in \h}  \frac{x_n^\sigma}{|B(x,x_n)|}    \int_{B(x,x_n)} \left(
|f(y)| y_n^{\beta} \right)^p  dy \le 1$.
\end{lem}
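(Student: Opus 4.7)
The plan is to split
\[
\Phi^*(z, z_n^\beta M_{\h,\nu}f(z)) = (z_n^\beta M_{\h,\nu}f(z))^{p^*} + (b(z)\, z_n^\beta M_{\h,\nu}f(z))^{q^*}
\]
and control each piece by a single invocation of Theorem A. The hypothesis $\sup \frac{r^\sigma}{|B(x,r)|}\int \Phi(y,|f(y)|y_n^\beta)\,dy \le 1$ dominates both the $L^p$-Morrey bound on $|f|y_n^\beta$ and the weighted $L^q$-Morrey bound on $b|f|y_n^\beta$, each at scale $\sigma$. Feeding the first of these into Theorem A with parameters $(p,p^*,\nu,\sigma,\beta)$ immediately handles the $(z_n^\beta M_{\h,\nu}f)^{p^*}$ piece.

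For the double-phase piece, the idea is to move $b(z)$ inside the maximal average by exploiting Hölder continuity. For $z \in \h$ and any admissible $r'$ (so $B(z,r')\subset \h$, hence $r'\le z_n$), the bound $|b(z)-b(y)| \le [b]_\theta |z-y|^\theta \le C(r')^\theta$ on $B(z,r')$ gives
\[
b(z)\frac{(r')^\nu}{|B(z,r')|}\int_{B(z,r')}|f(y)|\,dy \le \frac{(r')^\nu}{|B(z,r')|}\int_{B(z,r')} b(y)|f(y)|\,dy + C\frac{(r')^{\nu+\theta}}{|B(z,r')|}\int_{B(z,r')}|f(y)|\,dy.
\]
Taking the supremum over admissible $r'$ and separating the two terms yields the pointwise estimate
\[
b(z)\,M_{\h,\nu}f(z) \;\le\; M_{\h,\nu}(bf)(z) \;+\; C\,M_{\h,\nu+\theta}f(z),
\]
which is the structural identity driving the whole argument.

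Raising to the $q^*$ power, multiplying by $z_n^{\beta q^*}$, and integrating over $B(x,r) \subset \h$, it remains to control $z_n^\beta M_{\h,\nu}(bf)(z)$ and $z_n^\beta M_{\h,\nu+\theta}f(z)$ in the $L^{q^*}$-Morrey norm of order $\sigma$; Theorem A delivers both. For the first, apply Theorem A to $bf$ with $p$ replaced by $q$ and $p^*$ by $q^*$: the input Morrey bound on $(b|f|y_n^\beta)^q$ is exactly the second half of the hypothesis, and the Hölder constraint persists because $q>p$ gives $(n+1)/(2q') > (n+1)/(2p') > \beta$. For the second, apply Theorem A to $f$ with $\nu$ replaced by $\nu+\theta$: here the defining balance $1/q = 1/p - \theta/\sigma$ is precisely what forces $1/q^* = 1/p - (\nu+\theta)/\sigma$, so the output Morrey exponent still matches $q^*$. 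The main obstacle, and in fact the only nontrivial point, is the pointwise splitting of $b(z) M_{\h,\nu} f$ above: one must check that the Hölder deficit $(r')^\theta$ cleanly upgrades the fractional order of the maximal operator by $\theta$, which works precisely because admissible $r'$ are bounded by $z_n$ but no further truncation is imposed; the rest is bookkeeping of exponents matched to the Sobolev-type relations $1/p^* = 1/p-\nu/\sigma$ and $1/q^* = 1/q-\nu/\sigma$.
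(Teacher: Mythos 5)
Your proposal does not prove the statement it was asked to prove. Lemma \ref{lem:2.4} concerns the single-scale quantity $K(z)=\frac{z_n^{\nu}}{|B(z,z_n)|}\int_{B(z,z_n)}|f(y)|\,dy$ and asserts $\sup_{0<r<x_n}\frac{r^{\sigma}}{|B(x,r)|}\int_{B(x,r)}\bigl(K(z)z_n^{\beta}\bigr)^{p^*}dz\le C$ under the \emph{single-radius} hypothesis $\sup_{x\in\h}\frac{x_n^{\sigma}}{|B(x,x_n)|}\int_{B(x,x_n)}\bigl(|f(y)|y_n^{\beta}\bigr)^{p}dy\le 1$; no double phase functional, no $b$, and no exponent $q^*$ occur in it. What you argue instead is (a variant of) Theorem \ref{thm:2.1}: you split $\Phi^*$, derive the pointwise bound $b(z)M_{\h,\nu}f(z)\le M_{\h,\nu}(bf)(z)+CM_{\h,\nu+\theta}f(z)$, and invoke Theorem A three times. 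None of these steps ever estimates $K$, and the route cannot be repaired by the cheap observation $K(z)\le M_{\h,\nu}f(z)$ followed by Theorem A, because Theorem A needs the Morrey condition over \emph{all} balls $\h\cap B(x,r)$, $r>0$, whereas the lemma only assumes control on the balls $B(x,x_n)$. There is also a logical-order problem: in this paper Lemma \ref{lem:2.4} (quoted from \cite{MS5}) is itself an ingredient in the proof of Theorem \ref{thm:2.1} (it handles the terms $L_{21}$, $L_{22}$), so deducing it from the machinery of Theorem \ref{thm:2.1} would be circular.

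For the record, the intended argument (as in \cite{MS5}, and exactly parallel to the proof of Lemma \ref{lem:3.4} here) is short: by H\"older's inequality and Lemma \ref{lem:1.2} with $\ve=-\beta p'+n>(n-1)/2$ (this is where $\beta<(n+1)/(2p')$ enters), the hypothesis yields $\int_{B(z,z_n)}|f(y)|\,dy\le Cz_n^{-\beta+n-\sigma/p}$, hence the pointwise bound $K(z)z_n^{\beta}\le Cz_n^{\nu-\sigma/p}=Cz_n^{-\sigma/p^*}$. Then for $0<r<x_n/2$ one has $z_n\sim x_n$ on $B(x,r)$, so the weighted average is at most $C(r/x_n)^{\sigma}\le C$, while for $x_n/2\le r<x_n$ one applies Lemma \ref{lem:1.2} again with $\ve=n-\sigma>(n-1)/2$ (this is where $\sigma<(n+1)/2$ enters) to get $\int_{B(x,x_n)}z_n^{-\sigma}dz\le Cx_n^{n-\sigma}$. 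Your text contains none of these steps, so as a proof of Lemma \ref{lem:2.4} it is a complete miss, whatever its merits as an alternative proof of Theorem \ref{thm:2.1}.
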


Let us prove Theorem \ref{thm:2.1}.

\begin{proof}[{\it Proof of Theorem \ref{thm:2.1}}]
Let $f$ be a measurable function on $\h$ such that
$$
\sup_{r >0,x\in \h} 
	\frac{r^\sigma}{|B(x,r)|}\int_{\h\cap B(x,r)} 
	\Phi(y, |f(y)| y_n^\beta) dy \le 1.
$$ 
First we see from \cite[Theorem 2.1]{MS5} that
$$
\sup_{\{r > 0: B(x,r) \subset \h\}}  \frac{r^\sigma}{|B(x,r)|}\int_{B(x,r)}  
 (z_n^\beta M_{\h,\nu} f(z))^{p^*}  dz \le C.
$$
Next we show that
$$
\sup_{\{r > 0: B(x,r) \subset \h\}}  \frac{r^\sigma}{|B(x,r)|}\int_{B(x,r)}  
 (b(z) z_n^\beta M_{\h,\nu} f(z))^{q^*}  dz \le C.
$$
Note that 
  \begin{eqnarray*}
&& \frac{r^\nu}{|B(x,r)|} b(x) \int_{B(x,r)} |f(y)| dy  \\
&=& 
\frac{r^\nu}{|B(x,r)|}\int_{B(x,r)} \{b(x)-b(y)\} |f(y)| dy  
+
\frac{r^\nu}{|B(x,r)|}  \int_{B(x,r)} b(y) |f(y)| dy  \\
&\le&C\frac{r^{\nu+\theta}}{|B(x,r)|} \int_{B(x,r)} |f(y)|  dy 
+
\frac{r^\nu}{|B(x,r)|}  \int_{B(x,r)} b(y) |f(y)| dy,
\end{eqnarray*}
so that
\begin{eqnarray*}
L_1(x) &=& \sup_{0<r<x_n/2}
\frac{r^{\nu}}{|B(x,r)|} b(x) \int_{B(x,r)} |f(y)|  dy 
\\
&\le&  \sup_{0<r<x_n/2} C x_n^{-\beta}  \frac{r^{\nu+\theta}}{|B(x,r)|} \int_{B(x,r)} |f(y)|y_n^{\beta}  dy \\
&& + \sup_{0<r<x_n/2} C x_n^{-\beta}
\frac{r^\nu}{|B(x,r)|}  \int_{B(x,r)} b(y) |f(y)|y_n^{\beta}  dy \\
&\le& C x_n^{-\beta}M_{\nu+\theta} g(x) +C x_n^{-\beta} M_\nu h(x),
\end{eqnarray*}
where $g(y) = |f(y)|y_n^\beta  \chi_{\h}(y)$ and $h(y) =b(y) |f(y)|y_n^\beta  \chi_{\h}(y)$. We have \begin{eqnarray*}
L_2(x) &=& \sup_{x_n/2<r<x_n}
\frac{r^{\nu}}{|B(x,r)|} b(x) \int_{B(x,r)} |f(y)|  dy 
\\
&\le&C\frac{x_n^{\nu+\theta}}{|B(x,x_n)|} \int_{B(x,x_n)} |f(y)|  dy 
+
C \frac{x_n^\nu}{|B(x,x_n)|}  \int_{B(x,x_n)} b(y) |f(y)| dy  \\
&=& C\{L_{21}(x) + L_{22}(x)\}.
\end{eqnarray*}
Hence 
\begin{equation}\label{(1)}
b(z)  z_n^{\beta}M_{\h,\nu} f(z) \le C M_{\nu+\theta} g(z) +C M_\nu h(z)+  C L_{21}(z) z_n^{\beta}+  C L_{22}(z) z_n^{\beta}
\end{equation}
for $z \in B(x,r)$. By Lemma \ref{lem:2.4}, we obtain 
$$
\sup_{0<r<x_n} \frac{r^\sigma}{|B(x,r)|}  \int_{B(x,r)} \left( L_{21}(z) z_n^\beta \right)^{q^*} dz \le C
$$
and 
$$
\sup_{0<r<x_n} \frac{r^\sigma}{|B(x,r)|}  \int_{B(x,r)} \left( L_{22}(z) z_n^\beta \right)^{q^*} dz  \le C.
$$

By (\ref{(1)}) and Lemma \ref{lem:2.1}, we obtain for $r > 0$ such that $B(x,r) \subset \h$ 
\begin{eqnarray*}
&&    \frac{r^\sigma}{|B(x,r)|}\int_{B(x,r)}  
 (b(z) z_n^\beta M_{\h,\nu} f(z))^{q^*}  dz \\
&\le& C \frac{r^\sigma}{|B(x,r)|}\int_{B(x,r)}(M_{\nu+\theta} g(z))^{q^*}  dz
+C \frac{r^\sigma}{|B(x,r)|}\int_{B(x,r)}(M_{\nu} h(z))^{q^*}  dz \\
&& +C \frac{r^\sigma}{|B(x,r)|}    \int_{B(x,r)} \left( L_{21}(z) z_n^\beta \right)^{q^*} dz +C \frac{r^\sigma}{|B(x,r)|}    \int_{B(x,r)} \left( L_{22}(z) z_n^\beta \right)^{q^*} dz \\
&\le& C.
\end{eqnarray*}
Thus the proof is completed.
\end{proof}

\section{Sobolev's inequality}

In this section we are concerned 
with  the Riesz potential of order $\alpha$ in $\h$ defined by 
\[
I_{\h,\alpha} f(x) = \int_{B(x,x_n)} 
|x-y|^{\alpha-n} f(y) dy , 
\]
where $0 < \alpha < n$.

For $0 < r < x_n/2$, we see that 
\begin{equation}\label{eq:3.2}
  J_1(x)    =  \int_{B(x,x_n/2)} |x-y|^{\alpha-n} |f(y)| dy
 \le C x_n^{-\beta}  \int_{\h} |x-y|^{\alpha-n} g(y) dy,
\end{equation}
 where $g(y) = |f(y)| y_n^\beta \chi_{\h}(y)$, as before.

For $0<\alpha<n$ and a locally integrable function $g$ on $\R^n$, we define the usual Riesz potential $I_\alpha g$ of order $\alpha$ by 
\[
I_\alpha g(x) = \int_{\h} |x-y|^{\alpha-n} g(y) \, dy .
\]

The following result is due to Adams \cite{A}.

\begin{lem}[{\it Sobolev's inequality for Morrey spaces}]
 \label{lem:1.1}
    \sl
Let $1/p^* = 1/p - \alpha/\sigma > 0$. Suppose $\alpha p<\sigma \le n$. 
Then there exists a constant $C>0$ such that 
\[
    \frac{r^{\sigma}}{|B(x,r)|}\int_{B(x,r)}
    \left| I_\alpha g(z) \right|^{p^*}\, dz  \le C  
\]
for all $x\in \R^n$, $r>0$ and measurable functions  $g$ on $\R^n$ 
with 
\[
 \sup_{r >0,x\in \R^n} 
	\frac{r^\sigma}{|B(x,r)|}\int_{B(x,r)} |g(y)|^p dy \le 1. 
	\]  
\end{lem}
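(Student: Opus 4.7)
The plan is to reduce to a Hedberg-type pointwise inequality and then invoke the Chiarenza--Frasca boundedness of the Hardy--Littlewood maximal operator on Morrey spaces. Concretely, I would first establish
\[
|I_\alpha g(x)| \le C\, Mg(x)^{p/p^*}
\]
for every $x \in \R^n$, whenever $g$ has Morrey norm at most $1$, where $Mg$ denotes the Hardy--Littlewood maximal function of $g \chi_{\h}$. Raising to the $p^*$-th power and applying the classical estimate
\[
\frac{r^{\sigma}}{|B(x,r)|} \int_{B(x,r)} Mg(z)^p \, dz \le C,
\]
valid for $g$ with unit Morrey norm, will deliver the conclusion at once.

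For the pointwise inequality, I split for each $\delta > 0$
\[
|I_\alpha g(x)| \le \int_{\h \cap B(x,\delta)} |x-y|^{\alpha-n} |g(y)|\, dy + \int_{\h \setminus B(x,\delta)} |x-y|^{\alpha-n} |g(y)|\, dy =: A(\delta) + B(\delta).
\]
A standard dyadic decomposition of $B(x,\delta)$ around $x$ yields $A(\delta) \le C \delta^\alpha Mg(x)$. For $B(\delta)$, I decompose the exterior into the annuli $\{2^k \delta \le |x-y| < 2^{k+1} \delta\}$ and bound each shell by $(2^k \delta)^{\alpha-n} \int_{B(x, 2^{k+1}\delta)} |g|\, dy$. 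H\"older's inequality combined with the Morrey hypothesis gives $\int_{B(x,r)} |g|\, dy \le C r^{n - \sigma/p}$, so the $k$-th shell contributes $C (2^k \delta)^{\alpha - \sigma/p}$. Because the hypothesis $\alpha p < \sigma$ is precisely the condition $\alpha - \sigma/p < 0$, the geometric series converges and $B(\delta) \le C \delta^{\alpha - \sigma/p}$.

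Balancing $A(\delta)$ and $B(\delta)$ by choosing $\delta$ so that $\delta^{\sigma/p} \sim Mg(x)^{-1}$ produces the claimed bound $|I_\alpha g(x)| \le C\, Mg(x)^{1 - \alpha p / \sigma} = C\, Mg(x)^{p/p^*}$, and the lemma follows. The main (and essentially only) delicate point is the bookkeeping ensuring that the hypothesis $\alpha p < \sigma$ supplies the strict negativity needed for the dyadic sum in $B(\delta)$ to converge, and that the restriction of integration to $\h$ (rather than $\R^n$) is harmless, since $g \chi_{\h}$ inherits the Morrey condition on $\R^n$. Beyond that, the argument is entirely Adams' classical one and carries no new difficulty.
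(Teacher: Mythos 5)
The paper does not prove this lemma at all---it is quoted from Adams \cite{A}---and your argument is precisely the classical Hedberg-type proof of that result: the pointwise bound $|I_\alpha g(x)|\le C\,Mg(x)^{p/p^*}$ (the exponent bookkeeping is right, since $1-\alpha p/\sigma=p/p^*$, and the case $Mg(x)\in\{0,\infty\}$ is trivially harmless) combined with the Chiarenza--Frasca boundedness of $M$ on Morrey spaces, which uses exactly the standing hypotheses $p>1$ and $0<\sigma\le n$. So the proposal is correct and takes essentially the same route as the source the paper relies on.
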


\begin{lem}\label{lem:3.1}
\sl
Suppose 
\[
 \sup_{r >0,x\in \h} 
	\frac{r^\sigma}{|B(x,r)|}\int_{\h\cap B(x,r)} 
	\left( |f(y)| y_n^\beta \right)^p dy \le 1 . 
	\]
		If $1/p^* = 1/p - \alpha/\sigma > 0$, then there exists a constant $C>0$ such that
	  \begin{eqnarray*}
	 \sup_{\{r >0: B(x,r)\subset \h\}}  
	\frac{r^\sigma}{|B(x,r)|}\int_{B(x,r)} 
	\left( z_n^{\beta} J_1(z)  \right)^{p^*} dz \le C   . 
 \end{eqnarray*} 
\end{lem}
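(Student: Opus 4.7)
My plan is to combine the pointwise bound \eqref{eq:3.2} with the Adams-type Sobolev inequality for Morrey spaces (Lemma \ref{lem:1.1}); given that setup, the lemma follows in essentially one step.

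First I would record the pointwise estimate
\[
z_n^\beta J_1(z) \le C\, I_\alpha g(z), \qquad z \in \h,
\]
where $g(y) = |f(y)| y_n^\beta \chi_\h(y)$. This is just \eqref{eq:3.2} with $x$ replaced by $z$ and the factor $z_n^\beta$ absorbed into the left-hand side; the underlying reason is that $y \in B(z, z_n/2)$ forces $y_n \sim z_n$, so that $|f(y)| \le C z_n^{-\beta} g(y)$ uniformly regardless of the sign of $\beta$.

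Next I would transfer the hypothesis to a genuine $\R^n$-Morrey bound on $g$. The assumption says that the Morrey quantity is $\le 1$ when the center lies in $\h$. For a general $x \in \R^n$ with $\h \cap B(x,r) \neq \emptyset$, picking any $x' \in \h \cap B(x,r)$ gives $\h \cap B(x,r) \subset B(x', 2r)$, so a routine doubling argument yields
\[
\sup_{r >0,\, x \in \R^n}\frac{r^\sigma}{|B(x,r)|}\int_{B(x,r)} |g(y)|^p\, dy \le C,
\]
since $g$ is supported in $\h$. After a harmless rescaling of $g$, Lemma \ref{lem:1.1} applies and gives
\[
\frac{r^\sigma}{|B(x,r)|} \int_{B(x,r)} |I_\alpha g(z)|^{p^*}\, dz \le C
\]
for every ball in $\R^n$, in particular for every $B(x,r) \subset \h$. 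Raising the pointwise bound for $z_n^\beta J_1(z)$ to the power $p^*$, integrating on $B(x,r)$, and multiplying by $r^\sigma/|B(x,r)|$ produces the claimed inequality.

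The proof is essentially a bookkeeping exercise once \eqref{eq:3.2} and Lemma \ref{lem:1.1} are in hand; the only nontrivial input, namely the passage from $|f|$ to $g$ via $y_n \sim z_n$, has already been granted in the excerpt, so no genuine obstacle remains.
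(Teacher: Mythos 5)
Your proposal is correct and follows essentially the same route as the paper: the pointwise bound $z_n^\beta J_1(z)\le C\,I_\alpha g(z)$ from \eqref{eq:3.2} followed by an application of Lemma \ref{lem:1.1}. The only difference is that you spell out the doubling argument transferring the hypothesis (a supremum over centers $x\in\h$) to the full-space Morrey bound on $g$ required by Lemma \ref{lem:1.1}, a step the paper leaves implicit; this is a correct and harmless addition.
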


\begin{proof}
Set  $g(y) = |f(y) | y_n^\beta \chi_{\h}(y)$ for simplicity. By (\ref{eq:3.2}), we have
$$
  z_n^{\beta} J_1(z)   \le C I_\alpha g(z),
$$
so that by Lemma \ref{lem:1.1}
$$
 \frac{r^\sigma}{|B(x,r)|}\int_{B(x,r)} 
	\left( z_n^{\beta} J_1(z)  \right)^{p^*} dz  \le C  \frac{r^{\sigma}}{|B(x,r)|}\int_{B(x,r)}
    \left| I_\alpha g(z) \right|^{p^*}\, dz \le C,
$$
as required. 
\end{proof}

Next let us treat 
	  \begin{eqnarray}
J_2(x) &=& \int_{B(x,x_n)\setminus B(x,x_n/2)} |x-y|^{\alpha-n} |f(y)| dy \nonumber \\
&\le& C x_n^{\alpha-n}  
\int_{B(x,x_n)\setminus B(x,x_n/2)}  |f(y)| dy . \label{eq:3.3}
 \end{eqnarray}  
Here we prepare the following lemma. 

\begin{lem}\label{lem:3.4}
\sl
Suppose $1/p^* = 1/p - \alpha/\sigma > 0$, $\beta < (n+1)/(2p')$ 
and $0<\sigma < (n+1)/2$. 
Let $f$ be a measurable function on $\h$ satisfying 
\[
\sup_{r >0,x\in \h} 
	\frac{r^\sigma}{|B(x,r)|}\int_{\h\cap B(x,r)} 
	\left( |f(y)| y_n^\beta \right)^p dy \le 1 . 
	\]
Then there exists a constant $C>0$ such that
	  \begin{eqnarray*}
  \sup_{\{r >0: B(x,r)\subset \h\}} 
	\frac{r^\sigma}{|B(x,r)|}\int_{B(x,r)} 
	\left( z_n^\beta J_2(z) \right)^{p^*} dz \le C .
 \end{eqnarray*} 
\end{lem}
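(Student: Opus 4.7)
The plan is to observe that the crude $L^1$-average on the right of (\ref{eq:3.3}) is, up to a constant, the auxiliary function $K$ of Lemma \ref{lem:2.4}, so the statement reduces to a direct application of that lemma with the parameter there chosen to be $\nu=\alpha$.

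Concretely, since $|B(z,z_n)|\sim z_n^n$, the bound (\ref{eq:3.3}) rewrites as
\[
J_2(z) \le C\, z_n^{\alpha-n} \int_{B(z,z_n)} |f(y)|\, dy \;\sim\; C\, \frac{z_n^{\alpha}}{|B(z,z_n)|} \int_{B(z,z_n)} |f(y)|\, dy \;=\; C\, K(z),
\]
where $K$ is as in Lemma \ref{lem:2.4} with $\nu=\alpha$. Multiplying by $z_n^\beta$ yields the pointwise control $z_n^\beta J_2(z) \le C\, z_n^\beta K(z)$, so the conclusion of the lemma will follow once Lemma \ref{lem:2.4} is verified to apply for this choice of $\nu$.

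Next I would check the hypotheses of Lemma \ref{lem:2.4}. The identity $1/p^* = 1/p - \alpha/\sigma > 0$ matches the lemma's requirement after identifying its $\nu$ with $\alpha$, and the conditions $\beta < (n+1)/(2p')$ and $0<\sigma<(n+1)/2$ are shared. Specializing the Morrey-type assumption of the present lemma at $r=x_n$ (using $B(x,x_n)\subset \h$) yields
\[
\sup_{x\in \h} \frac{x_n^\sigma}{|B(x,x_n)|} \int_{B(x,x_n)} (|f(y)| y_n^\beta)^p\, dy \le 1,
\]
which is exactly the input required by Lemma \ref{lem:2.4}. Since $\{r>0: B(x,r)\subset\h\}$ coincides with $\{0<r\le x_n\}$, combining the pointwise estimate with Lemma \ref{lem:2.4} yields the desired bound on $(z_n^\beta J_2)^{p^*}$.

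There is no real obstacle: the substantive work (handling the $z_n^\beta$ weight together with the restriction to balls staying inside $\h$, where $z_n$ remains comparable to $x_n$) is already packaged in Lemma \ref{lem:2.4}. The only subtlety worth double-checking is that the weight $z_n^\beta$ sits on the correct side so that Lemma \ref{lem:2.4} applies directly, with no Hardy-type rearrangement needed.
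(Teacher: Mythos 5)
Your reduction is correct, but it follows a genuinely different route from the paper. You observe that by \eqref{eq:3.3} one has $J_2(z)\le C\,z_n^{\alpha-n}\int_{B(z,z_n)}|f(y)|\,dy\sim C\,K(z)$ with $\nu=\alpha$, check that the hypotheses of Lemma \ref{lem:2.4} (with $\nu=\alpha$) are exactly those of Lemma \ref{lem:3.4} and that the required input follows from the Morrey hypothesis at $r=x_n$ (where $\h\cap B(x,x_n)=B(x,x_n)$), and then quote Lemma \ref{lem:2.4} as a black box; this is legitimate, matches how the same lemma is used in the proof of Theorem \ref{thm:2.1}, and gives a shorter argument (the only cosmetic point is the endpoint $r=x_n$, covered by a trivial limiting argument, an issue the paper's own proof shares). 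The paper instead argues directly: H\"older's inequality together with Lemma \ref{lem:1.2} gives $\int_{B(x,x_n)}|f(y)|\,dy\le C x_n^{-\beta+n-\sigma/p}$, hence the pointwise bound $z_n^\beta J_2(z)\le C z_n^{-\sigma/p^*}$, and then the average of $z_n^{-\sigma}$ over $B(x,r)$ is controlled by splitting into $0<r<x_n/2$ (where $z_n\sim x_n$) and $x_n/2\le r<x_n$ (where Lemma \ref{lem:1.2} applies, using $\sigma<(n+1)/2$). What your approach buys is brevity, at the cost of importing the work from the quoted \cite[Lemma 2.7]{MS5}; what the paper's self-contained computation buys is the intermediate estimate \eqref{(3.2)}, which is reused later in the proofs of Theorems \ref{thm:3.3} and \ref{thm:3.4}, so the direct route is not redundant in context.
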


\begin{proof}
By \holder's inequality and Lemma \ref{lem:1.2}, we have  
\begin{eqnarray} 
 \int_{B(x,x_n)} |f(y)| dy 
&\le& \left( \int_{B(x,x_n)} \left( y_n^\beta |f(y)|\right)^{p} dy 
\right)^{1/p}
\left( \int_{B(x,x_n)}  y_n^{-\beta p'} dy \right)^{1/p'} \nonumber \\
&\le& C x_n^{-\beta+ n/p'}  \left( \int_{B(x,x_n)} 		\left( y_n^\beta |f(y)| \right)^{p} dy \right)^{1/p} \nonumber \\
&\le& C x_n^{-\beta+ n-\sigma/p}  \label{(3.2)}
 \end{eqnarray} 
since $-\beta p'+n>(n-1)/2$. By (\ref{eq:3.3}), we obtain
\begin{eqnarray*} 
 x_n^\beta J_2(x) 
&\le& C x_n^{\alpha+\beta-n}  \int_{B(x,x_n)}  |f(y)| dy \le C x_n^{\alpha-\sigma/p} =Cx_n^{-\sigma/p^*}.
 \end{eqnarray*} 
If $0 < r <  x_n/2$, then 
 \begin{eqnarray*}
	\frac{r^\sigma}{|B(x,r)|}\int_{B(x,r)} 
	\left( z_n^\beta J_2(z) \right)^{p^*} dz
&\le& 
	C\frac{r^\sigma}{|B(x,r)|}\int_{B(x,r)} 
	z_n^{-\sigma} dz \\
	&\le& C \left( \frac{r}{x_n}  \right)^{\sigma}  \le C
 \end{eqnarray*}
 and if $x_n/2  \le r < x_n$, then Lemma \ref{lem:1.2} gives 
 \begin{eqnarray*}
 	\frac{r^\sigma}{|B(x,r)|}\int_{B(x,r)} 
	\left( z_n^\beta J_2(z) \right)^{p^*} dz
&\le&
C\frac{r^\sigma}{|B(x,r)|}    \int_{B(x,r)} z_n^{-\sigma}  dz \\
&\le& C\frac{x_n^\sigma}{|B(x,x_n)|}    \int_{B(x,x_n)} z_n^{-\sigma}  dz  \le C  
 \end{eqnarray*} 
when $\sigma < (n+1)/2$.  
\end{proof}

\begin{thm} \label{thm:3.1}
\sl
Let  $1/p^* = 1/p - \alpha/\sigma > 0$.
Suppose $\beta < (n+1)/(2p')$ and $0<\sigma < (n+1)/2$. 
Then there exists a constant $C>0$ such that
\begin{eqnarray*}
\sup_{\{r > 0: B(x,r) \subset \h\}}  
\frac{r^\sigma}{|B(x,r)|}\int_{B(x,r)}  
 \left(z_n^\beta I_{\h,\alpha} f(z)  \right)^{p^*} 
  dz  &\le& C 
\end{eqnarray*}
when $f\ge 0$ such that $\displaystyle \sup_{r >0,x\in \h} 
	\frac{r^\sigma}{|B(x,r)|}\int_{\h\cap B(x,r)} 
	\left( |f(y)| y_n^\beta \right)^p dy \le 1$. 
\end{thm}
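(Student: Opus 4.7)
The plan is to split the Riesz potential according to the boundary distance of the evaluation point. Fix $x \in \h$ and $r>0$ with $B(x,r) \subset \h$, and for $z \in B(x,r)$ (so $z_n > 0$) write
\[
I_{\h,\alpha} f(z) = J_1(z) + J_2(z),
\]
where $J_1$ is the integral of $|z-y|^{\alpha-n} f(y)$ over the inner ball $B(z,z_n/2)$ and $J_2$ the integral over the annulus $B(z,z_n) \setminus B(z,z_n/2)$, precisely the pieces introduced in (\ref{eq:3.2}) and (\ref{eq:3.3}). Since $(a+b)^{p^*} \le 2^{p^*-1}(a^{p^*}+b^{p^*})$, it suffices to estimate the weighted Morrey averages
\[
\frac{r^\sigma}{|B(x,r)|}\int_{B(x,r)} (z_n^\beta J_i(z))^{p^*}\, dz, \qquad i=1,2,
\]
separately.

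The $J_1$-bound is exactly the content of Lemma~\ref{lem:3.1}. Its proof starts from the pointwise inequality (\ref{eq:3.2}), which uses the fact that $y \in B(z,z_n/2)$ forces $y_n \sim z_n$, to replace $|f(y)|$ by $C z_n^{-\beta} g(y)$ with $g = |f| y_n^\beta \chi_\h$, and then invokes Adams' Sobolev inequality for Morrey spaces (Lemma~\ref{lem:1.1}) applied to the full Riesz potential $I_\alpha g$. The $J_2$-bound is Lemma~\ref{lem:3.4}: inequality~(\ref{eq:3.3}) pulls $|z-y|^{\alpha-n}$ out as $z_n^{\alpha-n}$, and then H\"older's inequality combined with Lemma~\ref{lem:1.2} yields the pointwise estimate $z_n^\beta J_2(z) \le C z_n^{-\sigma/p^*}$. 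Here the hypothesis $\beta < (n+1)/(2p')$ is used precisely because it forces $-\beta p' + n > (n-1)/2$, which is exactly the range in which Lemma~\ref{lem:1.2} applies, and the hypothesis $\sigma < (n+1)/2$ is what lets one control the Morrey average of $z_n^{-\sigma}$ uniformly when $r$ approaches $x_n$ (again via Lemma~\ref{lem:1.2}).

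Summing the two contributions yields the bound claimed in Theorem~\ref{thm:3.1}. The conceptual work has already been packaged into Lemmas~\ref{lem:3.1} and~\ref{lem:3.4}, so the theorem reduces to the dyadic split above together with the triangle inequality in $L^{p^*}$; I do not anticipate any serious obstacle beyond this bookkeeping. The only point that deserves explicit attention is that both sub-estimates are genuinely valid on the truncated range $\{r : B(x,r) \subset \h\}$ (rather than all $r>0$), since it is precisely this truncation that guarantees $z_n > 0$ for every $z$ in the integration domain and thereby makes the weight $z_n^\beta$ and the reference radius $z_n$ behave comparably to $x_n$ when $r$ is close to $x_n$.
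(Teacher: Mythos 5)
Your proposal is correct and follows essentially the same route as the paper: the same split $I_{\h,\alpha}f(z)=J_1(z)+J_2(z)$ over $B(z,z_n/2)$ and the annulus $B(z,z_n)\setminus B(z,z_n/2)$, with $J_1$ handled by Lemma~\ref{lem:3.1} (via Adams' inequality, Lemma~\ref{lem:1.1}) and $J_2$ by Lemma~\ref{lem:3.4} (via H\"older and Lemma~\ref{lem:1.2}), then the elementary inequality for $(a+b)^{p^*}$. Your remarks on where $\beta<(n+1)/(2p')$ and $\sigma<(n+1)/2$ enter match the paper's use of these hypotheses exactly.
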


\begin{proof}
Let $f$ be a measurable function on $\h$ such that
$$ \sup_{r >0,x\in \h} 
	\frac{r^\sigma}{|B(x,r)|}\int_{\h\cap B(x,r)} 
	\left( |f(y)| y_n^\beta \right)^p dy \le 1.
$$ 
By Lemmas \ref{lem:3.1} and \ref{lem:3.4}, we obtain for $r > 0$ such that $B(x,r) \subset \h$ 
\begin{eqnarray*}
&&  \frac{r^\sigma}{|B(x,r)|}\int_{B(x,r)}   
\left( z_n^\beta I_{\h,\alpha} f(z)\right)^{p^*} dz \\
&\le& C \frac{r^\sigma}{|B(x,r)|}\int_{B(x,r)} \left( z_n^\beta J_1(z) \right)^{p^*} dz+C \frac{r^\sigma}{|B(x,r)|}    \int_{B(x,r)}
\left( z_n^\beta J_2(z) \right) ^{p^*} dz \\
&\le& C,
\end{eqnarray*}
as required.
\end{proof}

\begin{rem}\label{rem:3.1}
\rm
If $f(y) = |y_n|^{-1} (1+|y|)^{-m} \chi_{\h}(y)$, then 
\begin{itemize}
\item[(1)]
$\displaystyle
 \sup_{r>0,x\in \h} {\frac{r^\sigma}{|B(x,r)|} \int_{\h\cap B(x,r)} \left( f(y) y_n^\beta \right)^p dy < \infty}$
when $(\beta - 1) p + 1 > 0$ and $0\le \sigma + (\beta-1) p  < \sigma+ mp - n$; 
\item[(2)]  $\displaystyle I_{\h,\alpha} 	f(x) \ge C\int_{B(x,x_n/2)}  |x-y|^{\alpha-n} f(y) dy 
\ge C x_n^{\alpha-1}  (1+|x|)^{-m} $ for  $x\in \h$;
\item[(3)]
$\displaystyle
  \frac{x_n^{\sigma_1}}{|B(x,x_n)|}\int_{B(x,x_n)}   
 \left( z_n^{\beta} I_{\h,\alpha} f(z) \right)^{p^*} dz  
 \ge C \frac{x_n^{\sigma_1}}{|B(x,x_n)|}\int_{B(x,x_n)}   
 z_n^{(\beta+\alpha-1)p^*} dz \\
\to \infty 
$
as $x_n \to 0$ and $x\in \h\cap B(0,1)$ 
when $\sigma_1 + (\beta + \alpha - 1) p^*  < 0$, and thus the sharpness of exponent $\sigma$ is seen in  Theorem \ref{thm:3.1} 
when $-\sigma/p \le  \ \beta -1 < - \sigma_1/p -  (\sigma-\sigma_1)\alpha/\sigma$; 
\item[(4)]
$\displaystyle \int_{\h\cap B(0,1)}  
\left( x_n^\beta I_{\h,\alpha} f(x)  \right)^{p^*} dx =  \infty$ 
when $(\beta + \alpha - 1) p^* + 1 \le 0$ or 
$ \left(-1/p  < \right) \beta - 1 \le-1/p + (\sigma^{-1}-1)\alpha $. 
\end{itemize}
Hence, in Theorem \ref{thm:3.1}, the assumption that 
$B(x,r) \subset \h$ is needed when   
 $1/p' < \beta \le 1/p' + (\sigma^{-1}-1)\alpha$ 
 and $0 < (\sigma^{-1} - 1) \alpha < (n-1)/(2p')$. 
\end{rem}

\section{Sobolev's inequality for double phase functionals} \label{s4}

In this section, we are concerned with Sobolev's inequality for $I_{\h,\alpha} f$ of functions in weighted Morrey spaces of the double phase functional  $\Phi(x,t)$.

\begin{thm} \label{thm:3.2}
\sl
Let  $1/q = 1/p - \theta/\sigma$, 
$1/p^* = 1/p - \alpha/\sigma > 0$ and 
$1/q^* = 1/q - \alpha/\sigma > 0$.
Suppose $\beta < (n+1)/(2p')$ and $0<\sigma < (n+1)/2$. 
Then there exists a constant $C>0$ such that
\begin{eqnarray*}
\sup_{\{r > 0: B(x,r) \subset \h\}}  
\frac{r^\sigma}{|B(x,r)|}\int_{B(x,r)}  
\Phi^* \left( z,z_n^\beta I_{\h,\alpha} f(z)  \right) 
  dz  &\le& C 
\end{eqnarray*}
when $f\ge 0$ such that $\displaystyle \sup_{r >0,x\in \h} 
	\frac{r^\sigma}{|B(x,r)|}\int_{\h\cap B(x,r)} 
	\Phi\left(y, f(y) y_n^\beta \right) dy \le 1$. 
\end{thm}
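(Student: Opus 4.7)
The plan is to follow the two-piece strategy used in the proof of Theorem \ref{thm:2.1}. Since $\Phi^*(z,t) = t^{p^*} + (b(z)t)^{q^*}$, it suffices to bound the Morrey-type averages
\[
\frac{r^\sigma}{|B(x,r)|} \int_{B(x,r)} \bigl(z_n^\beta I_{\h,\alpha} f(z)\bigr)^{p^*} dz
\quad \text{and} \quad
\frac{r^\sigma}{|B(x,r)|} \int_{B(x,r)} \bigl(b(z) z_n^\beta I_{\h,\alpha} f(z)\bigr)^{q^*} dz
\]
uniformly over $\{r>0 : B(x,r) \subset \h\}$. The first quantity is handled directly by Theorem \ref{thm:3.1}, since the $\Phi$-hypothesis on $f$ implies in particular the $t^p$-Morrey bound $\sup_{r,x} \frac{r^\sigma}{|B(x,r)|}\int_{\h\cap B(x,r)} (|f(y)| y_n^\beta)^p dy \le 1$.

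The second quantity is the main work. The key step is a commutator-type decomposition exploiting the H\"older continuity $|b(x)-b(y)| \le C|x-y|^\theta$: writing $b(x) f(y) = [b(x)-b(y)] f(y) + b(y) f(y)$ under the integral defining $I_{\h,\alpha} f(x)$ and using $f \ge 0$ yields the pointwise estimate
\[
b(x) I_{\h,\alpha} f(x) \le C\, I_{\h, \alpha+\theta} f(x) + I_{\h,\alpha}(bf)(x).
\]
Raising to the power $q^*$ and multiplying by $z_n^{\beta q^*}$, the problem reduces to bounding two averages, each of the form treated by Theorem \ref{thm:3.1}, applied to $f$ and to $bf$ respectively with suitable parameter choices.

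For the term involving $I_{\h,\alpha+\theta} f$ I would invoke Theorem \ref{thm:3.1} with the same $p$ but with $\alpha$ replaced by $\alpha+\theta$. The scaling relation $1/q = 1/p - \theta/\sigma$ gives $1/p - (\alpha+\theta)/\sigma = 1/q - \alpha/\sigma = 1/q^*$, so the target exponent matches; the Morrey-type input is again the $t^p$-part of the $\Phi$-bound on $f$. For the term involving $I_{\h,\alpha}(bf)$ I would invoke Theorem \ref{thm:3.1} with $\alpha$ fixed but $p$ replaced by $q$; the target exponent is $1/q - \alpha/\sigma = 1/q^*$ as required, and the needed Morrey-type input $\sup_{r,x} \frac{r^\sigma}{|B(x,r)|}\int_{\h\cap B(x,r)} (b(y)|f(y)|y_n^\beta)^q dy \le 1$ is precisely the $(b(y)t)^q$-part of the $\Phi$-bound on $f$.

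The remaining point to verify is that the side conditions of Theorem \ref{thm:3.1} survive these parameter changes: the condition $\sigma < (n+1)/2$ is unaffected; positivity of the new Sobolev exponent is $1/q^*>0$, which is assumed; and $\beta < (n+1)/(2p')$ combined with $q>p \Rightarrow q'<p'$ gives $\beta < (n+1)/(2q')$. The main obstacle I anticipate is really the commutator decomposition of $b(x) I_{\h,\alpha} f(x)$; once it is in place, the rest is bookkeeping around the identity $1/q - 1/p = -\theta/\sigma$, which encodes exactly how the $\theta$-regularity of $b$ converts the loss of integrability from $p$ to $q$ into an increase of smoothing from $\alpha$ to $\alpha+\theta$.
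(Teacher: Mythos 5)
Your proof is correct, and it takes a shorter route than the paper's. You prove the single global pointwise bound $b(x)I_{\h,\alpha}f(x)\le C\,I_{\h,\alpha+\theta}f(x)+I_{\h,\alpha}(bf)(x)$ on all of $B(x,x_n)$ and then re-apply Theorem \ref{thm:3.1} twice, once with the pair $(p,\alpha+\theta)$ and once with $(q,\alpha)$, checking that the hypotheses transfer: $1/p-(\alpha+\theta)/\sigma=1/q-\alpha/\sigma=1/q^*>0$, $\beta<(n+1)/(2p')<(n+1)/(2q')$ since $q>p$ forces $q'<p'$, $\sigma<(n+1)/2$ is untouched, and the two Morrey inputs are exactly the $t^p$ and $(b(y)t)^q$ pieces of the $\Phi$-hypothesis (one should also note $\alpha+\theta<\sigma/p<(n+1)/2\le n$, so the shifted order is admissible, and $bf\ge 0$). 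The paper instead repeats, inside the proof of Theorem \ref{thm:3.2}, the near/far splitting of $B(x,x_n)$ into $B(x,x_n/2)$ and the annulus: on the near part it converts the weight via $y_n\sim x_n$ and estimates the resulting full-space potentials $I_{\alpha+\theta}g$, $I_{\alpha}h$ by Adams' inequality (Lemma \ref{lem:1.1}), while the annulus terms $T_{21},T_{22}$ are handled by Lemma \ref{lem:3.4}. Your version packages all of that weighted half-space machinery into the already-proved Theorem \ref{thm:3.1}, so you avoid re-deriving the annulus estimates and the weight transfer; the cost is only the parameter bookkeeping you carried out, and the shared core in both arguments is the commutator use of the H\"older continuity of $b$, which trades the jump from $p$ to $q$ against the gain of smoothing from $\alpha$ to $\alpha+\theta$ through $1/q=1/p-\theta/\sigma$. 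The paper's longer form has the side benefit of producing the intermediate pointwise inequality \eqref{(2)} in the same style as the maximal-function case (Theorem \ref{thm:2.1}) and as the later Campanato arguments, but for the statement at hand your argument is complete.
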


\begin{proof}
Let $f$ be a nonnegative measurable function on $\h$ such that
$$
\sup_{r >0,x\in \h} 
	\frac{r^\sigma}{|B(x,r)|}\int_{\h\cap B(x,r)} 
	\Phi(y, f(y) y_n^\beta) dy \le 1.
$$ 
First we see from Theorem \ref{thm:3.1} that
$$
\sup_{\{r > 0: B(x,r) \subset \h\}}  \frac{r^\sigma}{|B(x,r)|}\int_{B(x,r)}  
 (z_n^\beta I_{\h,\alpha} f(z))^{p^*}  dz \le C.
$$
Next we show that
$$
\sup_{\{r > 0: B(x,r) \subset \h\}}  \frac{r^\sigma}{|B(x,r)|}\int_{B(x,r)}  
 (b(z) z_n^\beta I_{\h,\alpha} f(z))^{q^*}  dz \le C.
$$
Note that 
  \begin{eqnarray*}
&& b(x) I_{\h,\alpha} f(x)  \\
&=& 
b(x) \int_{B(x,x_n/2)} |x-y|^{\alpha-n} f(y) dy  
+
 b(x) \int_{B(x,x_n)\setminus B(x,x_n/2)} |x-y|^{\alpha-n} f(y) dy  \\
&=& T_1(x)+T_2(x).
\end{eqnarray*}
Set $g(y) = |f(y)|y_n^\beta  \chi_{\h}(y)$ and $h(y) =b(y) f(y) y_n^\beta  \chi_{\h}(y)$ for simplicity. We have
\begin{eqnarray*}
T_1(x) &=& 
\int_{B(x,x_n/2)}  |x-y|^{\alpha-n} \{b(x)-b(y)\} f(y) dy  
+
  \int_{B(x,x_n/2)}  |x-y|^{\alpha-n} b(y) f(y) dy  \\
&\le&C \int_{B(x,x_n/2)} |x-y|^{\alpha-n+\theta} f(y) dy 
+
 \int_{B(x,x_n/2)}  |x-y|^{\alpha-n} b(y) f(y) dy \\
&\le&   C x_n^{-\beta}  
 \int_{B(x,x_n/2)}  |x-y|^{\alpha-n+\theta}  f(y) y_n^{\beta}  dy \\
&& + C x_n^{-\beta}
  \int_{B(x,x_n/2)}  |x-y|^{\alpha-n} b(y) f(y) y_n^{\beta}  dy \\
&\le& C x_n^{-\beta}I_{\alpha+\theta} g(x) +C x_n^{-\beta} I_{\alpha} h(x)
\end{eqnarray*}
and
\begin{eqnarray*}
T_2(x) 
&\le& C \int_{B(x,x_n)\setminus B(x,x_n/2)} |x-y|^{\alpha-n+\theta}
f(y) dy \\
&& +
\int_{B(x,x_n)\setminus B(x,x_n/2)} |x-y|^{\alpha-n} b(y) f(y) dy  \\
&=& CT_{21}(x) + T_{22}(x).
\end{eqnarray*}
Hence 
\begin{equation}\label{(2)}
b(z)  z_n^{\beta}I_{\h,\alpha}f(z) \le C I_{\alpha+\theta} g(z) +C I_{\alpha} h(z)+  C T_{21}(z) z_n^{\beta}+  T_{22}(z) z_n^{\beta}
\end{equation}
for $z \in B(x,r)$. By Lemma \ref{lem:3.4}, we obtain 
$$
\sup_{0<r<x_n} \frac{r^\sigma}{|B(x,r)|}  \int_{B(x,r)} \left( T_{21}(z) z_n^\beta \right)^{q^*} dz \le C
$$
and 
$$
\sup_{0<r<x_n} \frac{r^\sigma}{|B(x,r)|}  \int_{B(x,r)} \left( T_{22}(z) z_n^\beta \right)^{q^*} dz  \le C.
$$

By (\ref{(2)}) and Lemma \ref{lem:1.1}, we obtain for $r > 0$ such that $B(x,r) \subset \h$ 
\begin{eqnarray*}
&&    \frac{r^\sigma}{|B(x,r)|}\int_{B(x,r)}  
 (b(z) z_n^\beta I_{\h,\alpha} f(z))^{q^*}  dz \\
&\le& C \frac{r^\sigma}{|B(x,r)|}\int_{B(x,r)}(I_{\alpha+\theta} g(z))^{q^*}  dz
+C \frac{r^\sigma}{|B(x,r)|}\int_{B(x,r)}(I_{\alpha} h(z))^{q^*}  dz \\
&& +C \frac{r^\sigma}{|B(x,r)|}    \int_{B(x,r)} \left( T_{21}(z) z_n^\beta \right)^{q^*} dz + \frac{r^\sigma}{|B(x,r)|}    \int_{B(x,r)} \left( T_{22}(z) z_n^\beta \right)^{q^*} dz \\
&\le& C.
\end{eqnarray*}
Thus the proof is completed.
\end{proof}

\section{Weighted Campanato spaces for the double phase functionals}\label{s5}

In this section, we are concerned with Sobolev type inequalities for $I_{\h,\alpha} f$ in the Campanato setting.

For a measurable function $f$ on $\h$, $x\in \h$ and $0<r<x_n$, we set 
\[
	f_{B(x,r)}=\frac{1}{|B(x,r)|}\int_{B(x,r)}f(y)\, dy.
\] 
Set $g = f \chi_{B(x,x_n)}$ and 
 \[
	I_\alpha g(z) = \int_{\h} |z-y|^{\alpha-n}  g(y)\, dy.
\]

\begin{thm} \label{thm:3.3}
\sl
Suppose $\beta < (n+1)/(2p')$ and $\sigma = \alpha p < (n+1)/2$. 
If $0 < \ve < \min \{1,\alpha\}$ 
and $1/p_1 = 1/p - (\alpha-\ve)/\sigma = \ve/\sigma$ and $\beta p_1 > - (n+1)/2$, then there exists a constant $C>0$ such that
\begin{eqnarray*}
\sup_{\{r > 0: B(x,r) \subset \h\}}  
\frac{1}{|B(x,r)|}\int_{B(x,r)}  \left( z_n^\beta \left| I_{\h,\alpha} f(z)  -  (I_{\alpha} g)_{B(x,r)} \right| \right)^{p_1}    dz  &\le& C 
\end{eqnarray*}
when $g = f \chi_{B(x,x_n)}$ and 
\[
\sup_{r >0,x\in \h} 
	\frac{r^\sigma}{|B(x,r)|}\int_{\h\cap B(x,r)} \left( |f(y)| y_n^\beta \right)^p dy \le 1. 
	\]  
\end{thm}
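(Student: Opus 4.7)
The natural strategy is to pivot through $I_\alpha g(z)$ and write
\[
I_{\h,\alpha}f(z)-(I_\alpha g)_{B(x,r)} = \bigl[I_\alpha g(z)-(I_\alpha g)_{B(x,r)}\bigr] + \bigl[I_{\h,\alpha}f(z)-I_\alpha g(z)\bigr] =: A(z)+B(z),
\]
and to estimate $z_n^\beta A(z)$ and $z_n^\beta B(z)$ separately in $L^{p_1}(B(x,r))$.

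For the oscillation term $A(z)$, I would invoke the H\"older continuity of the Riesz kernel: since $0<\ve<1$, one has the elementary inequality
\[
\bigl||z-y|^{\alpha-n}-|w-y|^{\alpha-n}\bigr| \le C|z-w|^\ve\bigl(|z-y|^{\alpha-n-\ve}+|w-y|^{\alpha-n-\ve}\bigr).
\]
Integrating against $g$ and averaging over $w\in B(x,r)$ yields
\[
|A(z)| \le Cr^\ve\bigl[I_{\alpha-\ve}|g|(z)+(I_{\alpha-\ve}|g|)_{B(x,r)}\bigr].
\]
The identity $1/p_1=1/p-(\alpha-\ve)/\sigma$ is set up precisely so that Adams' Sobolev--Morrey embedding (Lemma \ref{lem:1.1}) applied to $I_{\alpha-\ve}$ outputs exactly the target exponent $p_1$. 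Since the Morrey hypothesis is stated for $|f|y_n^\beta$ and not for $|f|$, the weight has to be absorbed by a two-scale splitting, following Lemmas \ref{lem:3.1} and \ref{lem:3.4}: on $B(z,z_n/2)$ one has $y_n\sim z_n\sim x_n$ and the weight factors out cleanly; on $B(x,x_n)\setminus B(z,z_n/2)$ a H\"older estimate combined with Lemma \ref{lem:1.2} handles the near-boundary part. The relation $\ve p_1=\sigma$ then balances the prefactor $r^{\ve p_1}$ against the Morrey scaling $r^{-\sigma}$, giving a uniformly bounded contribution.

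For the discrepancy $B(z)$, note that $B(z,z_n)\bigtriangleup B(x,x_n)\subset B(x,x_n+2r)\setminus B(x,x_n-2r)$, an annulus around $\partial B(x,x_n)$ of thickness $\lesssim r$, on which $|z-y|\sim x_n$ for $z\in B(x,r)$. Hence
\[
|B(z)| \le Cx_n^{\alpha-n}\int_{B(x,x_n+2r)\setminus B(x,x_n-2r)} |f(y)|\,dy,
\]
and the remaining integral is controlled by H\"older's inequality as in the proof of Lemma \ref{lem:3.4}, using $\beta<(n+1)/(2p')$ together with $\sigma<(n+1)/2$. Raising to the $p_1$-th power and integrating $z_n^{\beta p_1}$ over $B(x,r)\subset B(x,x_n)$ is then handled by Lemma \ref{lem:1.2}, which requires exactly the condition $\beta p_1>-(n+1)/2$ (so that $\beta p_1+n>(n-1)/2$). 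A separate treatment of the subcases $r\le x_n/2$ and $x_n/2<r<x_n$ (in the latter, $r$ is replaced by $x_n$ throughout) completes the estimate of $B$.

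The main obstacle, as in the earlier lemmas, is the passage from the weighted Morrey control of $|f|y_n^\beta$ to the unweighted Adams bound needed for $I_{\alpha-\ve}|g|$: because $g=f\chi_{B(x,x_n)}$ lives on a region where $y_n$ ranges over all of $(0,2x_n)$, the weight does not factor out globally, and one is forced to redo the interior/near-boundary splitting of Lemmas \ref{lem:3.1} and \ref{lem:3.4}, with the role of $\beta<(n+1)/(2p')$ now being played by $\beta p_1>-(n+1)/2$ for integrability in Lemma \ref{lem:1.2}.
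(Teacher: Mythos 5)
Your small-radius analysis is sound and, up to reorganization, parallels the paper's: a kernel-smoothness estimate producing $r^\ve$ times a potential of order $\alpha-\ve$ (with $\ve p_1=\sigma$ doing the balancing), plus a domain-discrepancy term bounded by $C z_n^{-\beta}$ via H\"older and Lemma \ref{lem:1.2} using $\sigma=\alpha p$, and the condition $\beta p_1>-(n+1)/2$ entering at the final integration. The genuine gap is the regime where $r$ is comparable to $x_n$, i.e.\ where $z\in B(x,r)$ can have $z_n\ll x_n$. Your pivot through $I_\alpha g(z)$ forces you to control $I_{\alpha-\ve}|g|(z)=\int_{B(x,x_n)}|z-y|^{\alpha-\ve-n}|f(y)|\,dy$ at such near-boundary points $z$, and there your two-scale splitting breaks down: the claim that on $B(x,x_n)\setminus B(z,z_n/2)$ "a H\"older estimate combined with Lemma \ref{lem:1.2} handles the near-boundary part" is only justified when $z_n\sim x_n$. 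For $z_n\ll x_n$ the far region contains mass of $f$ at distance $\sim z_n$ from $z$ with small $y_n$, and the crude kernel bound $|z-y|^{\alpha-\ve-n}\le Cz_n^{\alpha-\ve-n}$ leads to $z_n^{(\beta+\alpha-\ve-n)p_1}$, whose integrability is \emph{not} implied by $\beta p_1>-(n+1)/2$. This tail $\int_{B(x,x_n)\setminus B(z,z_n)}|z-y|^{\alpha-\ve-n}|f(y)|\,dy$ is exactly the object treated in Lemmas \ref{lem:4.1}--\ref{lem:4.3}, and those require an additional lower bound on $\beta$ and a further integrability condition that Theorem \ref{thm:3.3} does not assume; nothing in your sketch supplies a substitute. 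The same regime also invalidates your annulus estimate for $B(z)$: the relation $|z-y|\sim x_n$ on $B(z,z_n)\bigtriangleup B(x,x_n)$ fails when $r$ is close to $x_n$ (both $|z-y|\ge x_n-2r$ and $|z-y|\ge z_n$ degenerate), and "replace $r$ by $x_n$ throughout" does not repair it.

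The paper's decomposition is designed precisely to avoid this: it never evaluates a potential over the big ball $B(x,x_n)$ at the point $z$. For $0<r<x_n/4$ it compares $I_{\h,\alpha}f(z)$ with the average over $w\in B(x,r)$ of $\int_{B(z,z_n)}|w-y|^{\alpha-n}f(y)\,dy$, so the smoothness term is $\le Cr^\ve I_{\h,\alpha-\ve}|f|(z)$, to which the already-proved weighted inequality (Theorem \ref{thm:3.1} with $\alpha-\ve$, $p_1$) applies directly; the domain discrepancy appears only inside the $w$-average, where $\frac{1}{|B(x,r)|}\int_{B(x,r)}|w-y|^{\alpha-n}\,dw\le Cx_n^{\alpha-n}$ tames the kernel and \eqref{(3.2)} gives the bound $Cz_n^{-\beta}$. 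For $x_n/4<r<x_n$ it forgoes cancellation altogether and bounds $I_{\h,\alpha}f(z)$ and $(I_\alpha g)_{B(x,r)}$ separately. To close your argument you would either have to add hypotheses of the type appearing in Lemma \ref{lem:4.3}, or abandon the pivot through $I_\alpha g(z)$ in the large-$r$ regime and revert to this structure.
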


\begin{proof}
Let $f$ be a nonnegative measurable function on $\h$ such that
\[
\sup_{r >0,x\in \h} 
	\frac{r^\sigma}{|B(x,r)|}\int_{\h\cap B(x,r)} \left( |f(y)| y_n^\beta \right)^p dy \le 1. 
	\]  
Let $0 < \ve < \min \{1,\alpha\}$. For $0 < r < x_n/4$
 and  $z \in B(x,r)$, we see that 
 \begin{eqnarray*}
&&    I_{\h,\alpha} f(z)  -  (I_{\alpha} g)_{B(x,r)} 
 \nonumber  \\
&=&   \frac{1}{|B(x,r)|} \int_{B(z,z_n)} \left(\int_{B(x,r)}
\{|z-y|^{\alpha-n} - |w-y|^{\alpha-n}\} dw \right)  f(y) dy    \nonumber  \\
&& {} +   \frac{1}{|B(x,r)|} \int_{B(x,r)} \left(\int_{B(z,z_n)} 
|w-y|^{\alpha-n} f(y) dy -\int_{B(x,x_n)} |w-y|^{\alpha-n} f(y) dy  \right) dw
   \nonumber  \\
   & = & I_1 + I_2 .
 \end{eqnarray*}
Note that 
  \begin{eqnarray*}    
 |I_1|   &\le& C\int_{B(x,2r)} |z-y|^{\alpha-n} |f(y)| dy 
  +   Cr \int_{B(z,z_n)\setminus B(x,2r)} 
  |z-y|^{\alpha-n-1} |f(y)| dy  \nonumber \\
   &\le& C\int_{B(z,3r)\cap B(z,z_n)} |z-y|^{\alpha-n} |f(y)| dy 
  +   Cr \int_{B(z,z_n)\setminus B(z,r)} 
  |z-y|^{\alpha-n-1} |f(y)| dy  \nonumber \\
   &\le& Cr^\ve \int_{B(z,3r)\cap B(z,z_n)} |z-y|^{\alpha-\ve-n} |f(y)| dy 
  +   Cr^\ve \int_{B(z,z_n)\setminus B(z,r)}   |z-y|^{\alpha-\ve-n} |f(y)| dy  \nonumber \\
  &\le& C r^\ve I_{\h,\alpha-\ve} |f|(z)   
\end{eqnarray*}
since $B(x, 2r) \subset B(z,z_n)$.  Moreover, 
 \begin{eqnarray*}    
&& \int_{B(z,z_n)} |w-y|^{\alpha-n} f(y) dy -\int_{B(x,x_n)} |w-y|^{\alpha-n} f(y) dy  \nonumber \\
&=& 
\int_{B(z,z_n)\setminus B(x,x_n)} 
|w-y|^{\alpha-n} f(y) dy -\int_{B(x,x_n)\setminus B(z,z_n)} |w-y|^{\alpha-n} f(y) dy   ,
 \end{eqnarray*}
 so that by \eqref{(3.2)}
\begin{eqnarray*}    
| I_2|  &\le&
Cx_n^{\alpha-n} \int_{B(z,z_n)\setminus B(x,x_n)} |f(y)| dy 
+ Cz_n^{\alpha-n} \int_{B(x,x_n)\setminus B(z,z_n)}|f(y)| dy \\
&\le&
Cz_n^{\alpha-n} \int_{B(z,z_n)} |f(y)| dy 
+ Cx_n^{\alpha-n} \int_{B(x,x_n)}|f(y)| dy \\
&\le& C z_n^{-\beta}+C x_n^{-\beta}   \\
&\le&  C z_n^{-\beta} 
 \end{eqnarray*}
since $\sigma = \alpha p$. Hence we find 
    \begin{eqnarray}    
 \left|  I_{\h,\alpha} f(z)  -  (I_{\alpha} g)_{B(x,r)}   \right|  
&\le& C r^\ve I_{\h,\alpha} |f|(z)  + Cz_n^{-\beta} \label{(3)}
 \end{eqnarray}
 for $0 < r < x_n/4$  and  $z \in B(x,r)$. 
 
For $x_n/4 < r < x_n$ and $z \in B(x,r)$, we see from \eqref{(3.2)} that 
\begin{eqnarray}
  &&\left| I_{\h,\alpha} f(z) 
 - (I_{\alpha} g)_{B(x,r)} \right|  \nonumber \\
 &\le& C\int_{B(z,z_n)} |z-y|^{\alpha-n} |f(y)| dy
 + Cx_n^{\alpha-n} \int_{B(x,x_n)} |f(y)| dy
  \nonumber \\
 &\le& Cr^\ve I_{\h,\alpha-\ve} |f|(z)  + C x_n^{-\beta}. \label{(4)} 
\end{eqnarray}
By \eqref{(3)}, \eqref{(4)}, Lemma \ref{lem:1.2} and Theorem \ref{thm:3.1}, we obtain 
 for $0 < r < x_n$ 
\begin{eqnarray*}
&&  \frac{1}{|B(x,r)|}\int_{B(x,r)}  \left( z_n^\beta \left| I_{\h,\alpha} f(z)  - (I_{\alpha} g)_{B(x,r)} \right| \right)^{p_1}    dz \\
&\le& C\frac{r^\sigma}{|B(x,r)|}\int_{B(x,r)} 
	\left( z_n^{\beta} I_{\h,\alpha-\ve} |f|(z) \right)^{p_1} dz 
	+  C \\
&\le& C  
\end{eqnarray*}
since $\ve p_1 = \sigma$, when $\beta p_1 > - (n+1)/2$. 

Thus this theorem is proved.
\end{proof}

Our second aim in this section is to establish the following result in the double phase setting.

\begin{thm} \label{thm:3.4}
\sl
Let  $1/q = 1/p - \theta/\sigma$
 and $\alpha q=\sigma = (\alpha+\theta)p$. 
 For $0 < \ve < \min \{1,\alpha\}$, set 
$1/q_1 = 1/q - (\alpha-\ve)/\sigma=\ve/\sigma >0$.
Suppose $\beta < (n+1)/(2p')$, $0<\sigma < (n+1)/2$ and  $\beta q_1 > - (n+1)/2$. 
Then there exists a constant $C>0$ such that
\begin{eqnarray*}
\sup_{\{r > 0: B(x,r) \subset \h\}}  
\frac{1}{|B(x,r)|}\int_{B(x,r)}  
\left(  z_n^\beta b(z) \left| I_{\h,\alpha} f(z)- (I_{\alpha} g)_{B(x,r)} \right|  \right)^{q_1}   dz  \le C 
\end{eqnarray*}
when $g = f \chi_{B(x,x_n)}$ and 
$$
 \sup_{r >0,x\in \h} 
	\frac{r^\sigma}{|B(x,r)|}\int_{\h\cap B(x,r)} 
	\Phi\left(y, |f(y)| y_n^\beta \right) dy \le 1.
$$ 
\end{thm}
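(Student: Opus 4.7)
The plan is to follow the proof of Theorem \ref{thm:3.3} while simultaneously applying the double-phase splitting trick from the proof of Theorem \ref{thm:3.2} to absorb the factor $b(z)$. From the hypothesis on $\Phi$ and the pointwise bounds $\Phi(y,t)\ge t^p$ and $\Phi(y,t)\ge (b(y)t)^q$, I would first extract two Morrey-type inequalities:
\[
\sup_{r,x}\frac{r^\sigma}{|B(x,r)|}\int_{\h\cap B(x,r)}(|f(y)|y_n^\beta)^p\,dy\le 1, \qquad \sup_{r,x}\frac{r^\sigma}{|B(x,r)|}\int_{\h\cap B(x,r)}(b(y)|f(y)|y_n^\beta)^q\,dy\le 1.
\]

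Next, I would fix $x\in\h$ and $r>0$ with $B(x,r)\subset\h$ and $z\in B(x,r)$, and decompose $I_{\h,\alpha}f(z)-(I_\alpha g)_{B(x,r)}=I_1+I_2$ exactly as in the proof of Theorem \ref{thm:3.3}, treating $0<r<x_n/4$ and $x_n/4\le r<x_n$ separately. After multiplying by $b(z)$ and writing $b(z)=(b(z)-b(y))+b(y)$ inside every Riesz-type integrand, the H\"older bound $|b(z)-b(y)|\le C|z-y|^\theta$ (valid on each bounded integration domain) combined with the $L^p$-H\"older estimate on $\int_{B(x,x_n)}|f|$ from Lemma \ref{lem:3.4} and its $L^q$-twin for $\int_{B(x,x_n)}b|f|$ should yield the pointwise estimate
\[
b(z)\bigl|I_{\h,\alpha}f(z)-(I_\alpha g)_{B(x,r)}\bigr|\le Cr^\ve\bigl(I_{\h,\alpha+\theta-\ve}|f|(z)+I_{\h,\alpha-\ve}(b|f|)(z)\bigr)+C\tau(z),
\]
where $\tau(z)=z_n^{-\beta}$ for $r<x_n/4$ and $\tau(z)=x_n^{-\beta}$ for $r\ge x_n/4$.

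I would then raise to the $q_1$-th power, multiply by $z_n^{\beta q_1}$, and average over $B(x,r)$. Because $\ve q_1=\sigma$, the prefactor $r^{\ve q_1}$ becomes $r^\sigma$. Theorem \ref{thm:3.1} applied with parameters $(p,\alpha+\theta-\ve)$ controls the first Riesz average: using $\sigma=(\alpha+\theta)p$, $1/p-(\alpha+\theta-\ve)/\sigma=\ve/\sigma=1/q_1$, and the weight condition $\beta<(n+1)/(2p')$ is given. Theorem \ref{thm:3.1} applied with parameters $(q,\alpha-\ve)$ controls the second: using $\sigma=\alpha q$, $1/q-(\alpha-\ve)/\sigma=\ve/\sigma=1/q_1$, and the condition $\beta<(n+1)/(2q')$ follows from $\beta<(n+1)/(2p')$ together with $p<q$. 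The tail contribution $\frac{1}{|B(x,r)|}\int_{B(x,r)}(z_n^\beta\tau(z))^{q_1}\,dz$ is bounded case by case: for $r<x_n/4$ the integrand is $\le C$ because $z_n\sim x_n$; for $r\ge x_n/4$ with $\beta q_1\ge 0$ one uses $z_n\le 2x_n$, and with $\beta q_1<0$ one invokes Lemma \ref{lem:1.2} with exponent $\beta q_1+n>(n-1)/2$, precisely the hypothesis $\beta q_1>-(n+1)/2$.

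The main obstacle will be keeping the H\"older splittings $b(z)=(b(z)-b(y))+b(y)$ consistent across both the principal term $I_1$ and the boundary term $I_2$, and verifying that the tail arising from $b(z)\cdot I_2$ in the sub-case $x_n/4\le r<x_n$ collapses back to $Cx_n^{-\beta}$. That collapse rests on the algebraic identities $\sigma/p=\alpha+\theta$ (for the part controlled by the $L^p$-H\"older estimate on $|f|$) and $\sigma/q=\alpha$ (for the $b|f|$ part estimated by its $L^q$ twin), together with Lemma \ref{lem:1.2} dominating the $y_n^{-\beta p'}$ and $y_n^{-\beta q'}$ integrals.
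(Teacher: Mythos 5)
Your proposal is correct and takes essentially the same route as the paper's own proof: the same $I_1+I_2$ decomposition carried over from Theorem \ref{thm:3.3}, the same splitting $b(z)=(b(z)-b(y))+b(y)$ with the \holder{} estimate and the $L^p$/$L^q$ twins of \eqref{(3.2)} to reach the pointwise bound $b(z)|I_{\h,\alpha}f(z)-(I_\alpha g)_{B(x,r)}|\le C r^\ve\bigl(I_{\h,\alpha-\ve+\theta}|f|(z)+I_{\h,\alpha-\ve}(b|f|)(z)\bigr)+C\tau(z)$, and the same conclusion via Theorem \ref{thm:3.1} applied with $(p,\alpha+\theta-\ve)$ and $(q,\alpha-\ve)$ plus Lemma \ref{lem:1.2} for the tail under $\beta q_1>-(n+1)/2$. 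No gaps to report.
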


\begin{proof}
Let $f$ be a measurable function on $\h$ such that
$$
 \sup_{r >0,x\in \h} 
	\frac{r^\sigma}{|B(x,r)|}\int_{\h\cap B(x,r)} 
	\Phi\left(y, |f(y)| y_n^\beta \right) dy \le 1.
$$ 
Let $0 < \ve < \min \{1,\alpha\}$. Then, for $0 < r < x_n/4$ and $z \in B(x,r)$, we see from the proof of Theorem \ref{thm:3.3} that  
\begin{eqnarray*}
   \left| I_{\h,\alpha} f(z)  - (I_{\alpha} g)_{B(x,r)} \right| 
   &\le& C r^\ve \int_{B(z,3r)\cap B(z,z_n)} |z-y|^{\alpha-\ve-n} |f(y)| dy   \\
&&{}  +   Cr^{\ve} \int_{B(z,z_n)\setminus B(z,r)}   |z-y|^{\alpha-\ve-n} |f(y)| dy   
 + |I_2|  \\
  &=& C(U_1+U_2) + |I_2|. 
\end{eqnarray*}
We have by \eqref{(3.2)}
\begin{eqnarray*}    
b(z)|I_2| &\le&
C  x_n^{\alpha-n+\theta} \int_{B(z,z_n)\setminus B(x,x_n)} |f(y)| dy 
+ C  x_n^{\alpha-n} \int_{B(z,z_n)\setminus B(x,x_n)} |b(y)f(y)| dy \\
&& {}+ C  z_n^{\alpha-n+\theta} \int_{B(x,x_n)\setminus B(z,z_n)}|f(y)| dy 
+ C  z_n^{\alpha-n} \int_{B(x,x_n)\setminus B(z,z_n)}|b(y)f(y)| dy \\
&\le&
C  x_n^{\alpha-n+\theta} z_n^{-\beta+n-\sigma/p}
+ C  z_n^{\alpha-n} z_n^{-\beta+n-\sigma/q} \\
&&{} +
C  z_n^{\alpha-n+\theta} x_n^{-\beta+n-\sigma/p}  
+ C  z_n^{\alpha-n} x_n^{-\beta+n-\sigma/q}\\
&\le&  C z_n^{-\beta} 
 \end{eqnarray*}
since $\sigma = (\alpha+\theta)p = \alpha q$. Hence we find 
\begin{eqnarray}
 b(z) \left| I_{\h,\alpha} f(z)  - (I_{\alpha} g)_{B(x,r)} \right| 
  &\le& Cb(z)(U_1+U_2) +C z_n^{-\beta} . 
 \label{(5)} 
\end{eqnarray}
Note that 
\begin{eqnarray*}
 b(z) U_1  
&=& 
r^\ve \int_{B(z,3r)\cap B(z,z_n)}  |z-y|^{\alpha-\ve-n} \{b(z)-b(y)\} |f(y)| dy  \\
&& +r^\ve  \int_{B(z,3r)\cap B(z,z_n)}  |z-y|^{\alpha-\ve-n} b(y) |f(y)| dy  \\
&\le&C r^{\ve} \int_{B(z,3r)\cap B(z,z_n)} |z-y|^{\alpha-\ve-n+\theta} |f(y)|  dy \\
&&+ r^{\ve}  \int_{B(z,3r)\cap B(z,z_n)}  |z-y|^{\alpha-\ve-n} b(y) |f(y)| dy \\
&\le& Cr^{\ve}  I_{\h,\alpha-\ve+\theta}|f|(z) +r^{\ve}  I_{\h,\alpha-\ve}(b|f|)(z).
\end{eqnarray*}
On the other hand,
\begin{eqnarray*}
b(z) U_2  
&=& 
r^\ve \int_{B(z,z_n)\setminus B(z,r)}  |z-y|^{\alpha-\ve-n} \{b(z)-b(y)\} |f(y)| dy  \\
&&+ r^\ve  \int_{B(z,z_n)\setminus B(z,r)}  |z-y|^{\alpha-\ve-n} b(y) |f(y)| dy  \\
&\le&C r^{\ve}\int_{B(z,z_n)\setminus B(z,r)}  |z-y|^{\alpha-\ve-n+\theta}  |f(y)| dy  \\
&&+  r^{\ve}  \int_{B(z,z_n)\setminus B(z,r)}  |z-y|^{\alpha-\ve-n} b(y) |f(y)| dy \\
&\le& Cr^{\ve}  I_{\h,\alpha-\ve+\theta}|f|(z) +r^{\ve}  I_{\h,\alpha-\ve}(b|f|)(z).
\end{eqnarray*}
Hence we find by (\ref{(5)})
    \begin{eqnarray}    
&& b(z) \left|  I_{\h,\alpha} f(z)  -  (I_{\alpha} g)_{B(x,r)}   \right|  \nonumber \\
&\le& C\left\{
r^{\ve}  I_{\h,\alpha-\ve+\theta}|f|(z) +r^{\ve}  I_{\h,\alpha-\ve}(b|f|)(z)  +  z_n^{-\beta} \right\} . \label{(5.4)}
 \end{eqnarray}
  
For $x_n/4 < r < x_n$, we see from the proof of Theorem \ref{thm:3.3}  that for $z \in B(x,r)$
\begin{eqnarray}
  && b(z) \left| I_{\h,\alpha} f(z)  - (I_{\alpha} g)_{B(x,r)} \right|  \nonumber \\
 &\le& C\int_{B(z,z_n)} |z-y|^{\alpha-n+\theta} |f(y)| dy+\int_{B(z,z_n)}  |z-y|^{\alpha-n} b(y) |f(y)| dy
 +  C x_n^{-\beta}  \nonumber \\
 &\le& C\left\{
 r^\ve I_{\h,\alpha-\ve+\theta} |f|(z)  +r^{\ve}  I_{\h,\alpha-\ve}(b|f|)(z)+  x_n^{-\beta} \right\}. \label{(5.5)} 
\end{eqnarray}
By  (\ref{(5.4)}),  (\ref{(5.5)}), Lemma \ref{lem:1.2} and Theorem \ref{thm:3.1} we have
\begin{eqnarray*}
 && \frac{1}{|B(x,r)|}\int_{B(x,r)} \left(  z_n^\beta b(z) \left| I_{\h,\alpha} f(z)  -  (I_{\h,\alpha} f)_{B(x,r)} \right|   \right)^{q_1} dz \\
&\le&C \biggl\{ \frac{r^{\sigma}}{|B(x,r)|}\int_{B(x,r)} |z_n^\beta I_{\h,\alpha-\ve+\theta}f(z) |^{q_1} dz \\
&& {} 
+\frac{r^{\sigma}}{|B(x,r)|}\int_{B(x,r)} |z_n^\beta  I_{\h,\alpha-\ve}(bf)(z) |^{q_1} dz
+ 1 \biggr\}
 \\
&\le&C
\end{eqnarray*} 
since $\ve q_1 = \sigma$, when $\beta q_1 > - (n+1)/2$. 

Thus the proof is completed.
\end{proof}


\section{Sobolev functions}

In this section, we are concerned with Sobolev type inequalities for Sobolev functions in the Campanato setting.

First let us show the following result.

\begin{lem}\label{lem:A1}
\sl
If $u\in C^1(\h)$ and $B(x,r) \subset \h$, then 
for $z\in B(x,r)$
\begin{eqnarray*}
  \left| u(z) -  u_{B(x,r)} \right| 
     \le  C\int_{B(x,r)} |z-w|^{1-n}  |\nabla u(w)| dw .
\end{eqnarray*} 
\end{lem}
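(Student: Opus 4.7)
The plan is to use the standard representation of $u(z) - u_{B(x,r)}$ via the fundamental theorem of calculus along line segments. Since $B(x,r)$ is convex and $u \in C^1$ on $\h \supset B(x,r)$, for any $z,w \in B(x,r)$ the segment joining them lies entirely in the domain of $\nabla u$, and writing $\omega = (w-z)/|w-z|$ gives
\[
u(z) - u(w) = -\int_0^{|z-w|} \nabla u(z+s\omega)\cdot \omega\, ds.
\]
Averaging over $w \in B(x,r)$ against $|B(x,r)|^{-1}$ and taking absolute values reduces the lemma to estimating the resulting double integral.

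Next I would pass to polar coordinates centered at $z$, writing $w = z+\rho\omega$ with $dw = \rho^{n-1}\, d\rho\, d\omega$ and $\rho$ ranging up to some $\rho_{\max}(\omega)\le 2r$. This gives
\[
|u(z)-u_{B(x,r)}| \le \frac{1}{|B(x,r)|}\int_{S^{n-1}}\int_0^{\rho_{\max}(\omega)}\!\rho^{n-1}\!\int_0^\rho |\nabla u(z+s\omega)|\, ds\, d\rho\, d\omega.
\]
Swapping the order of integration in $\rho$ and $s$ by Fubini, the inner factor becomes $\int_s^{\rho_{\max}}\rho^{n-1}\, d\rho \le C r^n$, and combined with $|B(x,r)|\sim r^n$ the $r$-factors cancel cleanly.

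Finally I would convert back to Cartesian coordinates via $y = z+s\omega$, for which $ds\, d\omega = |z-y|^{1-n}\, dy$ and $y$ varies over a subset of $B(x,r)$ (again by convexity). This produces the desired estimate
\[
|u(z) - u_{B(x,r)}| \le C\int_{B(x,r)}|z-w|^{1-n}|\nabla u(w)|\, dw
\]
with a constant $C$ depending only on $n$.

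The only delicate point is the bookkeeping in the polar/Cartesian change of variables and the Fubini swap, which is where the singular kernel $|z-w|^{1-n}$ emerges from the trade between the volume element $\rho^{n-1}$ and the length of the radial integration. The hypothesis $B(x,r) \subset \h$ is used precisely so that $\nabla u$ is defined along every segment joining two points of $B(x,r)$; otherwise the pointwise identity could not be invoked.
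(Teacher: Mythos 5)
Your proposal is correct and follows essentially the same route as the paper: both write $u(z)-u(w)$ by the fundamental theorem of calculus along the segment (using convexity of $B(x,r)\subset\h$), average over the ball, and perform a change of variables that trades the radial integration against the volume element to produce the kernel $|z-w|^{1-n}$. The paper parametrizes by $t\in[0,1]$ and substitutes $w=z+t(y-z)$, integrating $t^{-n}$ over $t\ge |w-z|/(2r)$, whereas you use arclength polar coordinates and a Fubini swap; this is only a cosmetic difference in bookkeeping.
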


\begin{proof}
By the mean value theorem for analysis we find 
\begin{eqnarray*}
  &&  \left| u(z) - u_{B(x,r)} \right| \\
     &=&  \left|  \frac{1}{|B(x,r)|}\int_{B(x,r)} \{u(z) - u(y)\} dy \right| \\
     &=&  \left|  \frac{1}{|B(x,r)|} \int_{B(x,r)}\left\{
     \int_0^1 \frac{d}{dt} u(z + t(y-z) dt \right\} dy \right| \\
      &\le&  \frac{1}{|B(x,r)|} \int_{B(x,r)}\left\{
     \int_0^1 |y-z| |\nabla u(z + t(y-z)| dt \right\} dy  \\
           &\le&  2r \frac{1}{|B(x,r)|} \int_0^1  \left\{
    \int_{B(x,r)} |\nabla u(z + t(y-z)| dy \right\} dt . 
 \end{eqnarray*}     
If $w =  z + t(y-z)$, then $|w-z| = t|y-z|  \le 2rt$, so that  
\begin{eqnarray*}
 \left| u(z) - u_{B(x,r)} \right| 
  &\le&  2r \frac{1}{|B(x,r)|}\int_{B(x,r)}  |\nabla u(w)| \left\{
     \int_{|w-z|/(2r)}^1 t^{-n} dt \right\} dw  \\
     &\le&  C\int_{B(x,r)} |z-w|^{1-n}  |\nabla u(w)| dw,
\end{eqnarray*} 
as required.
\end{proof}
 
\begin{thm} \label{thm:A1}
\sl
Suppose $1/p^* = 1/p - 1/\sigma > 0$, 
 $1-(n-1)/(2p')  -\sigma/p< \beta <  (n+1)/(2p')$ 
 and $(1  -(n-1)/(2p') -\sigma/p)p^* + (n+1)/2 > 0$.
 Then there exists a constant $C>0$ such that
\begin{eqnarray*}
\frac{r^\sigma}{|B(x,r)|}\int_{B(x,r)}  
 \left( z_n^\beta  \left| u(z) - u_{B(x,x_n)} \right| \right)^{p^*} 
  dz  &\le& C 
\end{eqnarray*}
for  $x\in \h$,  $0 < r < x_n$ and  
$u \in C^1(\h)$ with  
$$ 
\sup_{r > 0, x\in \h} 
	\frac{r^\sigma}{|B(x,r)|}\int_{\h\cap B(x,r)} 
	\left( |\nabla u(y)| y_n^\beta \right)^p dy \le 1.
$$ 
\end{thm}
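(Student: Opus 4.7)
The plan is to reduce this Campanato-type bound to an $L^{p^*}$-estimate for a Riesz-type potential via Lemma \ref{lem:A1}, then apply Theorem \ref{thm:3.1} to the bulk term and handle the residual ``tail'' directly. First I would extend Lemma \ref{lem:A1} to the borderline ball $B(x,x_n)$: applying it on $B(x,r')$ for $r'\nearrow x_n$ (valid since $B(x,r')\subset\h$) and passing to the limit, we obtain
\[
|u(z)-u_{B(x,x_n)}|\le C\int_{B(x,x_n)}|z-w|^{1-n}|\nabla u(w)|\,dw
\]
for $z\in B(x,r)$ with $0<r<x_n$. Splitting the integrand by $B(z,z_n)$ produces
\[
|u(z)-u_{B(x,x_n)}|\le C\,I_{\h,1}(|\nabla u|)(z)+C\,W(z),
\]
where $W(z)=\int_{B(x,x_n)\setminus B(z,z_n)}|z-w|^{1-n}|\nabla u(w)|\,dw$.

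Since $|\nabla u|$ inherits the weighted Morrey condition from the hypothesis, Theorem \ref{thm:3.1} with $\alpha=1$ and $f=|\nabla u|$ yields
\[
\frac{r^\sigma}{|B(x,r)|}\int_{B(x,r)}\bigl(z_n^\beta I_{\h,1}(|\nabla u|)(z)\bigr)^{p^*}dz\le C,
\]
which handles the bulk contribution; only the tail $W(z)$ remains.

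For $W(z)$, the elementary estimate is $W(z)\le Cz_n^{1-n}\int_{B(x,x_n)}|\nabla u(w)|\,dw\le Cz_n^{1-n}x_n^{n-\sigma/p-\beta}$, where the second inequality uses H\"older with the weight $w_n^{-\beta}$ and Lemma \ref{lem:1.2} (whose hypothesis reduces to condition (b) $\beta<(n+1)/(2p')$). Raising to the $p^*$-th power gives
\[
\bigl(z_n^\beta W(z)\bigr)^{p^*}\le C\,z_n^{(\beta+1-n)p^*}x_n^{(n-\sigma/p-\beta)p^*}.
\]
I would then integrate over $B(x,r)$ and split cases. If $r\le x_n/2$, then $z_n\sim x_n$ throughout $B(x,r)$ and the identity $(1-\sigma/p)p^*=-\sigma$ yields directly $(r/x_n)^\sigma\le 1$. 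In the harder regime $x_n/2<r<x_n$, I would invoke Lemma \ref{lem:1.2} to bound $\int_{B(x,r)}z_n^\gamma\,dz\le Cx_n^{\gamma+n}$ for the appropriate exponent $\gamma$, valid provided $\gamma+n>(n-1)/2$.

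The main obstacle is precisely this second regime: with the crude tail bound the exponent $\gamma=(\beta+1-n)p^*$ need not lie above $-(n+1)/2$, so one must sharpen $W(z)$ by a dyadic decomposition in $|z-w|$ (with geometric-series ratio governed by $\delta=1-n/p'-\sigma/p$) to replace the factor $z_n^{1-n}$ by a more favourable combination of $z_n$ and $x_n$. Once this is done, the two hypotheses $\beta>1-(n-1)/(2p')-\sigma/p$ and $(1-(n-1)/(2p')-\sigma/p)p^*+(n+1)/2>0$ are exactly what is needed so that the effective $\gamma p^*>-(n+1)/2$ and Lemma \ref{lem:1.2} supplies the factor $x_n^{\gamma+n}$ that, combined with $x_n^{(n-\sigma/p-\beta)p^*}$ and $r^\sigma/|B(x,r)|$, telescopes to an absolute constant, completing the proof.
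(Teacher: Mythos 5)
Your overall architecture coincides with the paper's: extend Lemma \ref{lem:A1} to $B(x,x_n)$, split the resulting potential at $B(z,z_n)$ into a bulk term $I_{\h,1}(|\nabla u|)$ handled by Theorem \ref{thm:3.1} (with $\alpha=1$) and a tail $W(z)$ over $B(x,x_n)\setminus B(z,z_n)$, and you correctly diagnose that the crude bound $W(z)\le Cz_n^{1-n}x_n^{\,n-\sigma/p-\beta}$ fails only in the regime $x_n/2<r<x_n$, where one would need $(\beta+1-n)p^*>-(n+1)/2$, which the hypotheses do not give. However, at exactly this decisive point your argument stops: you assert that a dyadic decomposition in $|z-w|$ will ``replace $z_n^{1-n}$ by a more favourable combination of $z_n$ and $x_n$'' and that the hypotheses are then ``exactly what is needed,'' but you never produce the sharpened estimate, and the quantitative data you do give are not the right ones. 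A dyadic decomposition using the crude cross-section bound $\int_{B(z,t)}w_n^{-\beta p'}dw\le Ct^{\,n-\beta p'}$ yields a per-scale exponent $1-\sigma/p-\beta$ (not your $\delta=1-n/p'-\sigma/p$), which would force the stronger hypothesis $\beta>1-\sigma/p$ rather than $\beta>1-(n-1)/(2p')-\sigma/p$; worse, that weight integral is not even finite for $\beta\ge 1/p'$, whereas the theorem allows $\beta$ up to $(n+1)/(2p')$.

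The missing ingredient is the geometric estimate the paper isolates as Lemma \ref{lem:4.1}: since the cross-section of $B(x,x_n)$ at height $y_n$ has horizontal radius $\lesssim\sqrt{x_n y_n}$, one has
\begin{equation*}
\int_{B(x,x_n)\cap B(z,t)}y_n^{-\beta p'}\,dy\le C\,x_n^{(n-1)/2}\,t^{(n+1)/2-\beta p'},
\end{equation*}
valid for all $\beta<(n+1)/(2p')$. Feeding this into the scale decomposition (the paper's Lemma \ref{lem:4.2}, written as an integral $\int_{z_n}^{\infty}(\cdots)r^{\alpha-1}dr$) and using the Morrey hypothesis at each scale gives
\begin{equation*}
z_n^\beta W(z)\le C\,x_n^{(n-1)/(2p')}\,z_n^{\,1-(n-1)/(2p')-\sigma/p},
\end{equation*}
where the lower bound $\beta>1-(n-1)/(2p')-\sigma/p$ guarantees convergence of the scale sum at the large end; the $\beta$-dependence in the $z_n$-exponent disappears, the factor $x_n^{(n-1)p^*/(2p')}$ cancels against the integral of $z_n^{-\sigma-(n-1)p^*/(2p')}$ over $B(x,r)$ via Lemma \ref{lem:1.2}, and it is the third hypothesis $(1-(n-1)/(2p')-\sigma/p)p^*+(n+1)/2>0$ that makes Lemma \ref{lem:1.2} applicable (this is the paper's Lemma \ref{lem:4.3}). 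Without proving an estimate of this strength, the claim that the stated hypotheses suffice is unverified, so the proposal has a genuine gap at the heart of the proof. (A minor additional point: to invoke Theorem \ref{thm:3.1} you should note that the third hypothesis forces $\sigma<(n+1)/2$, as in Remark \ref{rem:6.0}.)
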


\begin{rem}\label{rem:6.0}
\rm
The condition that $(1  -(n-1)/(2p') -\sigma/p)p^* + (n+1)/2 > 0$ 
is written as 
\[
((n+1)/2 -\sigma)/p^*   >  (n-1)/(2p') ,
\]
which holds at least near $p = 1$ when $1 < \sigma < (n+1)/2$. 
\end{rem}

For a proof of Theorem \ref{thm:A1} we note :    
if $z\in B(x,x_n)$, then  Lemma \ref{lem:A1} gives 
\begin{eqnarray}
 &&  \left| u(z) - u_{B(x,x_n)} \right| \nonumber \\
    & \le &  C  \int_{B(x,x_n)} |z-y|^{1-n}  |\nabla u(y)| dy  \nonumber \\
    & = &  C  \int_{B(z,z_n)} |z-y|^{1-n}  |\nabla u(y)| dy 
    +C  \int_{B(x,x_n)\setminus B(z,z_n)} |z-y|^{1-n}  |\nabla u(y)| dy  \nonumber    \\
     & = &  C\{ I_{\h,1}f(z) + I_2(z)\}   \label{(6)} ,
\end{eqnarray} 
where $f(y) = |\nabla u(y)| \chi_{\h}(y)$. 

\begin{lem}\label{lem:4.1}
\sl
Suppose  $\beta < (n+1)/(2p')$. 
Then there exists a constant $C>0$ such that
	  \begin{eqnarray*}
  	\int_{B(x,x_n)\cap B(z,r)}  	|f(y)| dy    
	&\le& C x_n^{(n-1)/(2p')}  r^{(n+1)/(2p')-\beta}  \\
	&& {} \times  \left( \int_{B(x,x_n)\cap B(z,r)} 	
	\left( y_n^\beta |f(y)|\right)^{p} dy 
\right)^{1/p}
 \end{eqnarray*} 
for $z \in B(x,x_n)$ and $r > z_n$. 
\end{lem}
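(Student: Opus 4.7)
The plan is to apply Hölder's inequality with conjugate exponents $p$ and $p'$, splitting the integrand as
\[
  \int_{B(x,x_n)\cap B(z,r)} |f(y)|\,dy
  = \int_{B(x,x_n)\cap B(z,r)} \bigl(y_n^\beta |f(y)|\bigr)\cdot y_n^{-\beta}\,dy,
\]
which produces the desired factor $\bigl(\int (y_n^\beta |f|)^p\,dy\bigr)^{1/p}$ and leaves a weight factor $\bigl(\int y_n^{-\beta p'}\,dy\bigr)^{1/p'}$. Matters then reduce to proving the weight estimate
\[
  \int_{B(x,x_n)\cap B(z,r)} y_n^{-\beta p'}\,dy \le C\,x_n^{(n-1)/2}\,r^{(n+1)/2-\beta p'},
\]
since raising to the $1/p'$-th power turns $x_n^{(n-1)/2}$ into $x_n^{(n-1)/(2p')}$ and $r^{(n+1)/2-\beta p'}$ into $r^{(n+1)/(2p')-\beta}$, exactly matching the claim.

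For this weight estimate I would apply Fubini in the variable $t=y_n$. The crucial observation is that, although the assumption $r>z_n$ allows $B(z,r)$ to cross the hyperplane $\{y_n=0\}$ transversally (its slice there has positive area $\sim (r^2-z_n^2)^{(n-1)/2}$), the intersection still sits inside $B(x,x_n)$, whose slice at height $t$ is an $(n-1)$-ball of radius $\sqrt{t(2x_n-t)}\le \sqrt{2\,t\,x_n}$. Hence the $(n-1)$-measure of the slice of $B(x,x_n)\cap B(z,r)$ at height $t$ is at most $C(t\,x_n)^{(n-1)/2}$, which vanishes like $t^{(n-1)/2}$ as $t\to 0$ and precisely absorbs the singular weight. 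Since $y\in B(z,r)$ with $r>z_n$ forces $y_n<z_n+r<2r$, Fubini yields
\[
  \int_{B(x,x_n)\cap B(z,r)} y_n^{-\beta p'}\,dy
  \le C\,x_n^{(n-1)/2} \int_0^{2r} t^{(n-1)/2-\beta p'}\,dt,
\]
and the hypothesis $\beta<(n+1)/(2p')$ is exactly the condition $(n-1)/2-\beta p'>-1$, making the $t$-integral converge and equal $C\,r^{(n+1)/2-\beta p'}$.

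The main subtlety I expect is the choice of which ball to use for the cross-sectional bound. Controlling the slice by that of $B(z,r)$ alone would leave a positive $(n-1)$-area down to $y_n=0$, which would both destroy the $x_n^{(n-1)/2}$ factor and render the one-dimensional integral divergent as soon as $\beta p'\ge 1$; it is indispensable to exploit the tangency of $B(x,x_n)$ to $\partial\h$. Once that geometric step is in place, the remainder of the argument is a routine Hölder estimate together with the elementary evaluation of a power-function integral.
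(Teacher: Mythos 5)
Your proposal is correct and follows essentially the same route as the paper: Hölder's inequality with exponents $p,p'$ after writing $|f|=(y_n^\beta|f|)\,y_n^{-\beta}$, followed by the weight estimate obtained by slicing $B(x,x_n)$ at height $y_n=t$ (slice radius $\le\sqrt{2x_nt}$, range $t<2r$ since $r>z_n$), with the hypothesis $\beta<(n+1)/(2p')$ ensuring convergence of the resulting power integral. The paper's proof is exactly this computation, so nothing further is needed.
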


\begin{proof}
For $z \in B(x,x_n)$ and $r > z_n$ we have by \holder's inequality 
  \begin{eqnarray*} 
&& \int_{B(x,x_n)\cap B(z,r)}  |f(y)| dy \\
&\le& \left( \int_{B(x,x_n)\cap B(z,r)} 		\left( y_n^\beta |f(y)|\right)^{p} dy 
\right)^{1/p}
\left( \int_{B(x,x_n)\cap B(z,r)}  y_n^{-\beta p'} dy \right)^{1/p'}.
 \end{eqnarray*} 
Here note 
  \begin{eqnarray*} 
 \int_{B(x,x_n)\cap B(z,r)}  y_n^{-\beta p'} dy   
&\le& C\int_0^{2r} 
 (\sqrt{x_n y_n})^{n-1} 	 y_n^{-\beta p'} dy_n \\
 &\le& Cx_n^{(n-1)/2}  r^{(n-1)/2-\beta p' + 1}  
 \end{eqnarray*} 
since $(n-1)/2-\beta p' + 1>0$. Therefore 
  \begin{eqnarray*} 
&& \int_{B(x,x_n)\cap B(z,r)}  |f(y)| dy \\
&\le&
C x_n^{(n-1)/(2p')}  r^{(n+1)/(2p')-\beta}   \left( \int_{B(x,x_n)\cap B(z,r)} 	
	\left( y_n^\beta |f(y)|\right)^{p} dy 
\right)^{1/p}, 
 \end{eqnarray*}
 which gives the result.  
\end{proof}

\begin{lem}\label{lem:4.2}
\sl
Suppose $\alpha-(n-1)/(2p') -\sigma/p< \beta <  (n+1)/(2p')$. 
Let $f$ be a measurable function on $\h$ satisfying 
\[
\sup_{x\in \h, 0<r<x_n} 
	\frac{r^\sigma}{|B(x,r)|}\int_{B(x,r)} 
	\left( |f(y)| y_n^\beta \right)^p dy \le 1 . 
	\]
Set 
\[
     J_{\alpha} |f|(z) = 
     \int_{B(x,x_n)\setminus  B(z,z_n)} 	|x-y|^{\alpha-n}    |f(y)| dy .
 \]
Then there exists a constant $C>0$ such that
	  \begin{eqnarray*}
  	z_n^\beta J_\alpha |f|(z) \le Cx_n^{(n-1)/(2p')}  
  	z_n^{\alpha -(n-1)/(2p') -\sigma/p}
 \end{eqnarray*} 
for $z\in B(x,x_n)$. 
\end{lem}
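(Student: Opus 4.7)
I will read the kernel in $J_\alpha$ as $|z-y|^{\alpha-n}$ (the form inherited from $I_2$ in equation~(6); the ``$|x-y|$'' in the statement appears to be a typo). The natural strategy is a dyadic decomposition of the annulus-like region around~$z$, followed by Hölder control from Lemma~\ref{lem:4.1} and the Morrey hypothesis.

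First I would set $r_k = 2^{k+1}z_n$ and decompose
\[
B(x,x_n)\setminus B(z,z_n) = \bigcup_{k\ge 0} A_k, \qquad A_k = \{y\in B(x,x_n): 2^k z_n \le |z-y| < r_k\},
\]
stopping when $r_k$ exceeds the diameter of $B(x,x_n)\cap\h$, i.e.\ at some $K$ with $2^K z_n \sim x_n$. On the $k$-th shell, $|z-y|^{\alpha-n}$ is comparable to $(2^k z_n)^{\alpha-n}$, so
\[
\int_{A_k} |z-y|^{\alpha-n}|f(y)|\,dy \le C(2^k z_n)^{\alpha-n}\int_{B(x,x_n)\cap B(z,r_k)} |f(y)|\,dy.
\]
Since $r_k > z_n$, Lemma~\ref{lem:4.1} applies and bounds the inner integral by
\[
C\, x_n^{(n-1)/(2p')}\, r_k^{(n+1)/(2p')-\beta}\Bigl(\int_{B(x,x_n)\cap B(z,r_k)} (y_n^\beta|f|)^p\,dy\Bigr)^{1/p}.
\]

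Next I would estimate the $L^p$ factor by the Morrey hypothesis, aiming for the bound $\le C\, r_k^{(n-\sigma)/p}$. This is the key analytic step: although the local Morrey condition is stated only for balls $B(x',r')\subset\h$, one obtains the required control on the intersection $B(x,x_n)\cap B(z,r_k)$ by a Whitney-type covering of that set by balls $B(y_j,y_{j,n}/2)\subset\h$, applying Morrey to each, and summing over dyadic height scales; the key slice bound $|B(x,x_n)\cap\{y_n=t\}| \lesssim (x_n t)^{(n-1)/2}$ (which is what drove Lemmas~\ref{lem:1.2} and~\ref{lem:4.1}) together with the standing assumption $\sigma<(n+1)/2$ makes the scale sum geometric and converges to an expression controlled by $r_k^{n-\sigma}$. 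Combining everything:
\[
\int_{A_k}|z-y|^{\alpha-n}|f|\,dy \le C\, x_n^{(n-1)/(2p')}\, r_k^{\,\alpha-n+(n+1)/(2p')+(n-\sigma)/p-\beta}.
\]
Using $\tfrac{1}{p}+\tfrac{1}{p'}=1$ one checks that the exponent on $r_k$ simplifies to $\alpha-(n-1)/(2p')-\sigma/p-\beta$, which the hypothesis $\beta>\alpha-(n-1)/(2p')-\sigma/p$ forces to be strictly negative.

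Finally, a negative exponent means the geometric series $\sum_{k\ge 0} r_k^{\,\text{exp}}$ is dominated by its $k=0$ term, giving $J_\alpha|f|(z)\le C\, x_n^{(n-1)/(2p')}\, z_n^{\alpha-(n-1)/(2p')-\sigma/p-\beta}$. Multiplying by $z_n^\beta$ produces the stated bound. \textbf{Main obstacle.} The most delicate point is precisely the Morrey-type estimate $\int_{B(x,x_n)\cap B(z,r)}(y_n^\beta|f|)^p\,dy\lesssim r^{n-\sigma}$, because this intersection is not a ball contained in $\h$ and hence the hypothesis does not apply directly; carrying out the Whitney/slice accounting so that the $r_k$-exponent lands exactly on $\alpha-(n-1)/(2p')-\sigma/p-\beta$ (as opposed to the weaker exponent coming from a crude bound on all of $B(x,x_n)$) is the crux of the proof.
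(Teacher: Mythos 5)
Your skeleton coincides with the paper's proof: the kernel is indeed to be read as $|z-y|^{\alpha-n}$ (that is how the quantity arises from $I_2$ in (\ref{(6)})), your dyadic shells are the discrete form of the paper's bound $J_\alpha |f|(z)\le C\int_{z_n}^\infty\bigl(|B(x,r)|^{-1}\int_{B(x,x_n)\cap B(z,r)}|f(y)|\,dy\bigr)r^{\alpha-1}\,dr$, Lemma \ref{lem:4.1} is invoked in the same way, and your exponent algebra and the use of $\beta>\alpha-(n-1)/(2p')-\sigma/p$ to make the series (resp.\ the $r$-integral) converge and be dominated by the scale $r\sim z_n$ are exactly the paper's. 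So the only issue is the step you yourself flag as the crux.

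That step, as you propose to carry it out, fails. The Whitney covering does not give $\int_{B(x,x_n)\cap B(z,r)}(y_n^\beta|f(y)|)^p\,dy\lesssim r^{n-\sigma}$: at height $t$ the slice of $B(x,x_n)\cap B(z,r)$ has horizontal radius about $\min(\sqrt{x_nt},r)$, so you need on the order of $\bigl(\min(\sqrt{x_nt},r)/t\bigr)^{n-1}$ admissible balls of radius $\sim t$, each contributing $Ct^{n-\sigma}$, and the dyadic sum over $t\le 2r$ comes out as $C\bigl(r^{n-\sigma}+x_n^{\sigma-1}r^{n+1-2\sigma}\bigr)$; for $1<\sigma<(n+1)/2$ and $z_n\ll r\ll x_n$ the second term exceeds $r^{n-\sigma}$ by the factor $(x_n/r)^{\sigma-1}$. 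Moreover this is not a defect of the covering but of the claim itself under the restricted hypothesis: taking $(|f(y)|y_n^\beta)^p=y_n^{-\sigma}$ (which satisfies $\sup_{x\in\h,0<r<x_n}r^\sigma|B(x,r)|^{-1}\int_{B(x,r)}(|f|y_n^\beta)^p\,dy\le C$ by Lemma \ref{lem:1.2}, since $\sigma<(n+1)/2$) and $z=(x',z_n)$ with $z_n\ll r\ll x_n$, a direct computation over the region $\{0<y_n<r/4,\ |y'-x'|<\tfrac14\min(\sqrt{x_ny_n},r)\}$ gives $\int_{B(x,x_n)\cap B(z,r)}(y_n^\beta|f|)^p\,dy\sim x_n^{\sigma-1}r^{n+1-2\sigma}\gg r^{n-\sigma}$ when $\sigma>1$. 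So no covering argument can produce the bound you need from admissible balls alone. What the paper actually uses at this point is the Morrey condition in the unrestricted form in which the lemma is applied in Theorems \ref{thm:A1} and \ref{thm:A2}, namely $\sup_{r>0,\,x\in\h}r^\sigma|B(x,r)|^{-1}\int_{\h\cap B(x,r)}(|f|y_n^\beta)^p\,dy\le1$ over all radii; since $B(x,x_n)\cap B(z,r)\subset\h\cap B(z,r)$, this yields the factor $r^{(n-\sigma)/p}$ immediately, and with that reading the rest of your argument closes verbatim. You were right to identify this as the delicate point (the paper passes over it silently), but your proposed bridge from the restricted hypothesis does not hold; you should instead assume (or note that the applications supply) the all-radii form of the Morrey condition.
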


\begin{proof}
Let $f$ be a nonnegative measurable function on $\h$ satisfying 
\[
 \sup_{x\in \h, 0<r<x_n} 
	\frac{r^\sigma}{|B(x,r)|}\int_{B(x,r)} 
	\left( |f(y)| y_n^\beta \right)^p dy \le 1 . 
	\]
By Lemma \ref{lem:4.1}, we have  
\begin{eqnarray*} 
J_\alpha |f|(z) &\le& C \int_{B(x,x_n)\setminus  B(z,z_n)} 	
 |z-y|^{\alpha-n} |f(y)| dy 
\\
&\le& C \int_{z_n}^\infty  \left(	\frac{1}{|B(x,r)|}
 \int_{B(x,x_n)\cap  B(z,r)}  |f(y)| dy \right)  r^{\alpha-1} dr \\
&\le& Cx_n^{(n-1)/(2p')}   \int_{z_n}^\infty r^{-\beta +(n+1)/(2p')- n+(n-\sigma)/p}  r^{\alpha-1} dr
  \\
&\le&  Cx_n^{(n-1)/(2p')}  z_n^{\alpha- \beta  -(n-1)/(2p') -\sigma/p},
 \end{eqnarray*} 
since 
$\alpha-(n-1)/(2p') -\sigma/p< \beta <  (n+1)/(2p')$, 
which completes the proof. 
\end{proof}

\begin{lem}\label{lem:4.3}
\sl
Suppose $1/p^* = 1/p - \alpha/\sigma > 0$, 
 $\alpha-(n-1)/(2p')  -\sigma/p< \beta <  (n+1)/(2p')$ 
 and $(\alpha  -(n-1)/(2p') -\sigma/p)p^* > - (n+1)/2$. 
Let $f$ be a measurable function on $\h$ satisfying 
\[
 \sup_{x\in \h, 0<r<x_n} 
	\frac{r^\sigma}{|B(x,r)|}\int_{B(x,r)} 
	\left( |f(y)| y_n^\beta \right)^p dy \le 1 . 
	\]
Then there exists a constant $C>0$ such that
	  \begin{eqnarray*}
	\frac{r^\sigma}{|B(x,r)|}\int_{B(x,r)} 
	\left( z_n^\beta J_\alpha |f|(z) \right)^{p^*} dz \le C .
 \end{eqnarray*} 
 for $0 < r < x_n$. 
\end{lem}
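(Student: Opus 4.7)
My plan is to reduce to the pointwise bound from Lemma~\ref{lem:4.2} and then integrate, splitting into the two natural cases $0<r<x_n/2$ and $x_n/2\le r<x_n$. Set $\gamma := \alpha - (n-1)/(2p') - \sigma/p$, so that Lemma~\ref{lem:4.2} raised to the $p^*$-th power gives the pointwise estimate
\[
\bigl(z_n^\beta J_\alpha|f|(z)\bigr)^{p^*} \le C\, x_n^{(n-1)p^*/(2p')}\, z_n^{\gamma p^*}
\]
for all $z\in B(x,x_n)$. The Sobolev scaling $1/p^* = 1/p - \alpha/\sigma$ rearranges to $\alpha p^* - \sigma p^*/p = -\sigma$, which yields the key identity
\[
\frac{(n-1)p^*}{2p'} + \gamma p^* = \Bigl(\alpha - \frac{\sigma}{p}\Bigr) p^* = -\sigma
\]
that drives every subsequent cancellation of powers of $x_n$.

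In the first range $0<r<x_n/2$, one has $z_n\sim x_n$ uniformly on $B(x,r)$, so integrating the pointwise bound and using $|B(x,r)|\sim r^n$ should produce the bound $C\,r^\sigma x_n^{(n-1)p^*/(2p')+\gamma p^*} = C\,r^\sigma x_n^{-\sigma}\le C$ since $r<x_n$. In the second range $x_n/2\le r<x_n$, the ball $B(x,r)$ is contained in $B(x,x_n)$, and the hypothesis $\gamma p^* > -(n+1)/2$ is precisely the threshold that legitimizes Lemma~\ref{lem:1.2} with $\ve = \gamma p^* + n > (n-1)/2$. This gives $\int_{B(x,r)} z_n^{\gamma p^*}\,dz\le C\,x_n^{\gamma p^*+n}$; combining with $r\sim x_n$ and the key identity above makes all powers of $x_n$ cancel, again producing the bound $C$.

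There is no essential difficulty beyond this bookkeeping; the point is really that the two quantitative hypotheses on $\beta$ and on $\gamma p^*$ have been chosen so as to match exactly the thresholds imposed by the preceding lemmas (Lemma~\ref{lem:4.2} for the pointwise bound, Lemma~\ref{lem:1.2} for the weighted integral near the boundary), while the Sobolev scaling identity $\alpha p^*-\sigma p^*/p=-\sigma$ guarantees the cancellation that removes all residual $x_n$-dependence in both regimes.
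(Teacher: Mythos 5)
Your proposal is correct and follows essentially the same route as the paper: the pointwise bound of Lemma \ref{lem:4.2} raised to the power $p^*$, the scaling identity $(\alpha-\sigma/p)p^*=-\sigma$ to cancel the powers of $x_n$, the elementary estimate for $0<r<x_n/2$, and Lemma \ref{lem:1.2} with $\ve=\gamma p^*+n>(n-1)/2$ for $x_n/2\le r<x_n$. Nothing further is needed.
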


\begin{proof}
By Lemma \ref{lem:4.2}, we have for $0 < r < x_n$ 
\begin{eqnarray*}
&& \frac{r^\sigma}{|B(x,r)|}\int_{B(x,r)} 
	\left( z_n^\beta J_\alpha |f|(z) \right)^{p^*} dz \\
&\le& Cx_n^{(n-1)p^*/(2p')}
  \frac{r^\sigma}{|B(x,r)|}\int_{B(x,r)} 
	z_n^{(\alpha  -(n-1)/(2p') -\sigma/p)p^*}  dz \\
&=& Cx_n^{(n-1)p^*/(2p')}
  \frac{r^\sigma}{|B(x,r)|}\int_{B(x,r)} 
	z_n^{-\sigma-(n-1)p^*/(2p')}  dz
\end{eqnarray*}
since $\alpha-(n-1)/(2p')  -\sigma/p< \beta <  (n+1)/(2p')$. If $0<r<x_n/2$, then 
$$
x_n^{(n-1)p^*/(2p')} \frac{r^\sigma}{|B(x,r)|}\int_{B(x,r)} z_n^{-\sigma-(n-1)p^*/(2p')}  dz
\le C \left(\frac{r}{x_n} \right)^\sigma \le C
$$
and if $x_n/2 \le r<x_n$, then Lemma \ref{lem:1.2} gives
\begin{eqnarray*}
&& x_n^{(n-1)p^*/(2p')}
  \frac{r^\sigma}{|B(x,r)|}\int_{B(x,r)} z_n^{-\sigma-(n-1)p^*/(2p')}  dz \\
&\le& C x_n^{(n-1)p^*/(2p')}
  \frac{x_n^\sigma}{|B(x,x_n)|}\int_{B(x,r)} z_n^{-\sigma-(n-1)p^*/(2p')}  dz \\
&\le& C
\end{eqnarray*}
since $(\alpha  -(n-1)/(2p') -\sigma/p)p^* > - (n+1)/2$. 

Thus we complete the proof.
\end{proof}

Now let us prove Theorem \ref{thm:A1}.

\begin{proof}[{\it Proof of Theorem \ref{thm:A1}}]
Let  $u \in C^1(\h)$ with  
$$ \sup_{r >0,x\in \h} 
	\frac{r^\sigma}{|B(x,r)|}\int_{\h\cap B(x,r)} 
	\left( |\nabla u(y)| y_n^\beta \right)^p dy \le 1.
$$ 
Set $f(y) = |\nabla u(y)| \chi_{\h}(y)$. 
By (\ref{(6)}), Theorem \ref{thm:3.1} and Lemma \ref{lem:4.3}, we obtain 
for $x\in \h$ and $0 < r < x_n$ 
\begin{eqnarray*}
&&  \frac{r^\sigma}{|B(x,r)|}\int_{B(x,r)}   \left( z_n^\beta \left| u(z) - u_{B(x,x_n)} \right|  \right)^{p^*} dz \\
&\le& C \frac{r^\sigma}{|B(x,r)|}\int_{B(x,r)} \left( z_n^\beta I_{\h,1}f(z) \right)^{p^*} dz+C \frac{r^\sigma}{|B(x,r)|}    \int_{B(x,r)}
\left( z_n^\beta I_2(z) \right)^{p^*} dz \\
&\le& C,
\end{eqnarray*}
as required.
\end{proof}

\begin{rem}\label{rem:4.1}
\rm
Let $u(x) = x_n^{-\ve}$. Then 
\begin{itemize}
\item[(1)]
$\displaystyle \sup_{x\in \h, 0<r<x_n} 
	\frac{r^\sigma}{|B(x,r)|}\int_{B(x,r)} 
	\left( |\nabla u(y)| y_n^\beta \right)^p dy < \infty$ 
when $\sigma = -(\beta -\ve-1)p < (n+1)/2$ ;  
\item[(2)]
$\displaystyle \frac{x_n^\sigma}{|B(x,x_n)|}\int_{B(x,x_n)} 
	\left( |u(y)| y_n^\beta \right)^{p^*} dy = \infty$ 
	when $(\beta -\ve)p^* \le - (n+1)/2$. 
\end{itemize}
Now $\ve$ is taken so that 
\[
     1 - (n+1)/(2p)  < \beta -\ve \le - (n+1)/(2p^*) 
     =  (n+1)/(2\sigma) - (n+1)/(2p)  
\]
when $p < \sigma < (n+1)/2$. In this case,  
\begin{itemize}
	\item[(3)]
	$\displaystyle \sup_{r > 0, x\in \h} 
	\frac{r^\sigma}{|B(x,r)|}\int_{\h\cap B(x,r)} 
	\left( |\nabla u(y)| y_n^\beta \right)^p dy = \infty$   
\end{itemize}
and we do not know whether Theorem \ref{thm:A1} 
holds under a weaker condition such as (1). 
\end{rem}

Our final goal is to obtain the following result in the double phase setting.

\begin{thm} \label{thm:A2}
\sl
Let  $1/q = 1/p - \theta/\sigma$, 
$1/p^* = 1/p - 1/\sigma > 0$ and 
$1/q^* = 1/q - 1/\sigma > 0$.
Suppose  
\[
- (n+1)/(2p^*) <1+\theta-(n-1)/(2p') -\sigma/p< \beta <  (n+1)/(2p') 
 \]
and 
 \[
 - (n+1)/(2q^*)  < 1-(n-1)/(2q')  -\sigma/q< \beta <  (n+1)/(2q') . 
\]
  Then there exists a constant $C>0$ such that
\begin{eqnarray*}
\frac{r^\sigma}{|B(x,r)|}\int_{B(x,r)}  
\Phi^* \left(z, z_n^\beta  \left| u(z) - u_{B(x,x_n)} \right| \right)   dz  &\le& C 
\end{eqnarray*}
for  $x\in \h$,  $0 < r < x_n$ and  
$u \in C^1(\h)$ with  \\
$$ \sup_{r >0,x\in \h} 
	\frac{r^\sigma}{|B(x,r)|}\int_{\h\cap B(x,r)} 
	\Phi \left( y, |\nabla u(y)| y_n^\beta \right) dy \le 1.
$$ 
\end{thm}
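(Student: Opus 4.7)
The plan is to combine the decomposition used in the proof of Theorem \ref{thm:A1} (see \eqref{(6)}) with the double-phase splitting exploited in the proofs of Theorems \ref{thm:2.1} and \ref{thm:3.2}. Since $\Phi^*(z,t) = t^{p^*} + (b(z)t)^{q^*}$, the integral to be estimated splits into a pure $p^*$-term and a $q^*$-term weighted by $b(z)$, and these are controlled separately. The $p^*$-term follows directly from Theorem \ref{thm:A1}, since the double-phase hypothesis on $\Phi(y,|\nabla u(y)|y_n^\beta)$ forces the $L^p$-Morrey condition on $y_n^\beta|\nabla u|$, and the first chain of range restrictions in Theorem \ref{thm:A2} contains the hypotheses of Theorem \ref{thm:A1}.

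For the $q^*$-term, I would set $f = |\nabla u|\chi_{\h}$ and start from \eqref{(6)}, which gives $|u(z) - u_{B(x,x_n)}| \le C(I_{\h,1}f(z) + J_1 f(z))$, with the second piece being the integral over $B(x,x_n)\setminus B(z,z_n)$ as in Lemma \ref{lem:4.3}. Multiplying by $b(z)$ and writing $b(z) = (b(z) - b(y)) + b(y)$ with the estimate $|b(z)-b(y)| \le C|z-y|^\theta$ inside each of the two integrals yields the pointwise bound
\[
b(z)\bigl|u(z)-u_{B(x,x_n)}\bigr| \le C\bigl(I_{\h,1+\theta}f(z) + I_{\h,1}(bf)(z) + J_{1+\theta}f(z) + J_1(bf)(z)\bigr).
\]
Each of these four terms is then controlled by a single application of earlier results. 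For $I_{\h,1+\theta}f$, Theorem \ref{thm:3.1} with $\alpha = 1+\theta$ and input exponent $p$ produces output exponent $q^*$ (since $1/p - (1+\theta)/\sigma = 1/q - 1/\sigma$), using the $L^p$-Morrey bound on $y_n^\beta|\nabla u|$; for $I_{\h,1}(bf)$, Theorem \ref{thm:3.1} with $\alpha = 1$ and input exponent $q$ again gives output $q^*$, using the $L^q$-Morrey bound on $y_n^\beta b|\nabla u|$. The two $J$-terms are dispatched identically by Lemma \ref{lem:4.3} with the analogous parameter choices. Raising to the power $q^*$ and averaging over $B(x,r)$ with the weight $r^\sigma/|B(x,r)|$ then yields the desired $q^*$-estimate.

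The main obstacle is bookkeeping rather than analysis: one must verify that the several range conditions on $\beta$, $\sigma/p$, and $\sigma/q$ required by the four applications above are all subsumed by the two chains of inequalities in the hypothesis of Theorem \ref{thm:A2}. The first chain (involving $p^*$, $p'$, $\sigma/p$, and the shift $1\mapsto 1+\theta$) is calibrated for the two operators of order $1+\theta$ acting on $L^p$-data, while the second chain (involving $q^*$, $q'$, $\sigma/q$, and order $1$) is calibrated for the two operators of order $1$ acting on $L^q$-data. Once these verifications are in place, combining the $p^*$- and $q^*$-bounds gives $\Phi^*$-control and the theorem follows as a routine synthesis of the proofs of Theorems \ref{thm:A1} and \ref{thm:3.2}.
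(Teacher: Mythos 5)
Your proposal is correct and follows essentially the same route as the paper: the $p^*$-part is taken from Theorem \ref{thm:A1}, and for the $q^*$-part the paper likewise starts from the decomposition \eqref{(6)}, splits $b(z)=(b(z)-b(y))+b(y)$ inside both integrals to produce exactly your four terms, and then invokes Theorem \ref{thm:3.1} (orders $1+\theta$ on the $L^p$-data and $1$ on the $L^q$-data $b|\nabla u|y_n^\beta$) together with Lemma \ref{lem:4.3} for the two tail terms. The only thing you defer, checking that the stated ranges for $\beta$ cover these four applications, is treated just as implicitly in the paper, so no further comment is needed.
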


\begin{proof}
Let $u \in C^1(\h)$ with
$$ \sup_{r >0,x\in \h} 
	\frac{r^\sigma}{|B(x,r)|}\int_{\h\cap B(x,r)} 
	\Phi \left( y, |\nabla u(y)| y_n^\beta \right) dy \le 1.
$$
First we see from Theorem \ref{thm:A1} that
\begin{eqnarray*}
\frac{r^\sigma}{|B(x,r)|}\int_{B(x,r)}  
 \left( z_n^\beta  \left| u(z) - u_{B(x,x_n)} \right| \right)^{p^*} 
  dz  &\le& C. 
\end{eqnarray*}
Next we show that
\begin{eqnarray*}
\frac{r^\sigma}{|B(x,r)|}\int_{B(x,r)}  
 \left( b(z) z_n^\beta  \left| u(z) - u_{B(x,x_n)} \right| \right)^{q^*} 
  dz  &\le& C .
\end{eqnarray*}
Set $f(y) = |\nabla u(y)| \chi_{\h}(y)$. Recall from (\ref{(6)}) that
\begin{equation} \label{(7)}
  \left| u(z) - u_{B(x,x_n)} \right|  \le C\{ I_{\h,1}f(z) + I_2(z)\},
\end{equation} 
Note that 
\begin{eqnarray*}
&& b(z)  I_{\h,1}f(z)  \\
&=& 
\int_{B(z,z_n)}  |z-y|^{1-n} \{b(z)-b(y)\} |f(y)| dy  
+
  \int_{B(z,z_n)}  |z-y|^{1-n} b(y) |f(y)| dy  \\
&\le&C \int_{B(z,z_n)} |z-y|^{1-n+\theta} |f(y)|  dy 
+
 \int_{B(z,z_n)}  |z-y|^{1-n} b(y) |f(y)| dy \\
&\le&C I_{\h,1+\theta}|f|(z) +  I_{\h,1}(b|f|)(z).
\end{eqnarray*}
On the other hand,
\begin{eqnarray*}
&& b(z)  I_2(z)  \\
&=& 
\int_{B(x,x_n)\setminus B(z,z_n)}  |z-y|^{1-n} \{b(z)-b(y)\} |f(y)| dy  \\
&& +
  \int_{B(x,x_n)\setminus B(z,z_n)}  |z-y|^{1-n} b(y) |f(y)| dy  \\
&\le&C \int_{B(x,x_n)\setminus B(z,z_n)}  |z-y|^{1-n+\theta}  |f(y)| dy  
+
  \int_{B(x,x_n)\setminus B(z,z_n)}  |z-y|^{1-n} b(y) |f(y)| dy  \\
&=& C I_{21}(z) +I_{22}(z). 
\end{eqnarray*}

By Lemma \ref{lem:4.3}, we obtain 
$$
\sup_{0<r<x_n} \frac{r^\sigma}{|B(x,r)|}  \int_{B(x,r)} \left( I_{21}(z) z_n^\beta \right)^{q^*} dz \le C
$$
and 
$$
\sup_{0<r<x_n} \frac{r^\sigma}{|B(x,r)|}  \int_{B(x,r)} \left( I_{22}(z) z_n^\beta \right)^{q^*} dz  \le C.
$$

By (\ref{(7)}) and Theorem \ref{thm:3.1}, we obtain for $x\in \h$ and $0 < r < x_n$ 
\begin{eqnarray*}
&&    \frac{r^\sigma}{|B(x,r)|}\int_{B(x,r)}  
 (b(z) z_n^\beta \left| u(z) - u_{B(x,x_n)} \right|^{q^*}  dz \\
&\le& C \frac{r^\sigma}{|B(x,r)|}\int_{B(x,r)}   (b(z) z_n^\beta  I_{\h,1}f(z))^{q^*}  dz
+ C \frac{r^\sigma}{|B(x,r)|}\int_{B(x,r)}   (b(z) z_n^\beta  I_{2}(z))^{q^*}  dz \\
 &\le& C \frac{r^\sigma}{|B(x,r)|}\int_{B(x,r)}(z_n^\beta  I_{\h,1+\theta}f(z))^{q^*}  dz
+C \frac{r^\sigma}{|B(x,r)|}\int_{B(x,r)}(z_n^\beta  I_{\h,1}(b|f|)(z))^{q^*}  dz \\
&& +C \frac{r^\sigma}{|B(x,r)|}    \int_{B(x,r)} \left( I_{21}(z) z_n^\beta \right)^{q^*} dz +C \frac{r^\sigma}{|B(x,r)|}    \int_{B(x,r)} \left( I_{22}(z) z_n^\beta \right)^{q^*} dz \\
&\le& C.
\end{eqnarray*}
Thus the proof is completed.
\end{proof}

{\sc Acknowledgements.} We would like to express our thanks to the referees for their kind comments and helpful suggestions.


\vspace{3mm}

\it
\begin{flushright}
\begin{tabular}{c}
Department of Mathematics \\
Graduate School of Advanced Science and Engineering \\
Hiroshima University \\
Higashi-Hiroshima 739-8521, Japan \\
E-mail : yomizuta@hiroshima-u.ac.jp\\
and\\
Department of Mathematics  \\
Graduate School of Humanities and Social Sciences \\
Hiroshima University  \\
Higashi-Hiroshima 739-8524, Japan \\
E-mail : tshimo@hiroshima-u.ac.jp \\
\end{tabular}
\end{flushright}
\rm

\end{document}